\def\@url#1{{\tt\def~{\lower3.5pt\hbox{\char'176}}\def\_{\char'137}#1}}
\def\makeautorefname#1#2{\expandafter\def\csname#1autorefname\endcsname{#2}}
                   \let\c@lemma\c@theorem
\newtheorem{thm}{Theorem}[subsection]
\newtheorem{cor}[thm]{Corollary}
\newtheorem{prop}[thm]{Proposition}
\newtheorem{lem}[thm]{Lemma}
\theoremstyle{definition}
\newtheorem{defn}[thm]{Definition}
\newtheorem{rem}[thm]{Remark}
\newtheorem{notation}[thm]{Notation}
\let\c@lem=\c@thm
\let\c@cor=\c@thm
\let\c@prop=\c@thm
\let\c@lem=\c@thm
\let\c@defn=\c@thm
\let\c@exmps=\c@thm
\let\c@rem=\c@thm
\let\c@warn=\c@thm
\let\c@claim=\c@thm
\let\c@quest=\c@thm
\numberwithin{equation}{subsection}
\newcommand{\U}{\mathbb{U}}
\newcommand{\M}{\mathsf{M}}
\newcommand{\Z}{\mathbb{Z}}
\DeclareSymbolFontAlphabet{\scr}{rsfs}
\def\quickop#1{\expandafter\newcommand\csname #1\endcsname{\operatorname{#1}}}
\DeclareMathOperator{\Tot}{Tot}
\newcommand{\K}{\mathbb{K}}
\newcommand{\can}{\mathrm{Can}}
\newcommand{\res}{\mathrm{res}}
\newcommand{\inj}{\mathrm{inj}}
\newcommand{\ve}{\varepsilon}
\newcommand\egal[2]{\overset {#1}{\underset {#2}\rightrightarrows }}
\newcommand{\WE}{\mathcal{W}}
\newcommand{\Fib}{\mathcal{F}}
\newcommand{\Cof}{\mathcal{C}}
\DeclareMathOperator{\LLP}{LLP}
\DeclareMathOperator{\RLP}{RLP} 
\newcommand{\sM}{\mathsf M}
\newcommand{\sN}{\mathsf N}
\newcommand{\sP}{\mathsf P}
\newcommand{\sfC}{\mathsf C}
\newcommand{\tilZ}{\widetilde{\mathbb{Z}}}
\newcommand{\spe}{\mathsf{Sp}^\Sigma}
\newcommand{\lra}{\longrightarrow}
\newcommand{\coalg}{\mathsf{coAlg}}
\newcommand{\sset}{\mathsf{sSet_*}}
\newcommand\adjunction[4]{\xymatrix{#1\ar @<1.18ex>[rr]^-{#3}&\perp&#2\ar @<1.18ex>[ll]^-{#4}}}
\definecolor{darkspringgreen}{rgb}{0.09, 0.45, 0.27}
\definecolor{darkterracotta}{rgb}{0.8, 0.31, 0.36}
	\definecolor{darkcoral}{rgb}{0.8, 0.36, 0.27}
	\definecolor{indiagreen}{rgb}{0.07, 0.53, 0.03}
	\definecolor{mountainmeadow}{rgb}{0.19, 0.73, 0.56}
	\definecolor{mountbattenpink}{rgb}{0.6, 0.48, 0.55}
	\definecolor{palatinatepurple}{rgb}{0.41, 0.16, 0.38}
	\definecolor{cinnamon}{rgb}{0.82, 0.41, 0.12}
	\definecolor{chocolate}{rgb}{0.82, 0.41, 0.12}
\newcommand{\sAb}{\mathsf{sAb}}
\DeclareMathOperator{\Map}{Map} 
\title{The homotopy theory of coalgebras over simplicial comonads}
\author[Hess]{Kathryn Hess}
\address[Hess]{SV UPHESS BMI, \'Ecole Polytechnique F\'ed\'erale de Lausanne}
\author[K\k{e}dziorek]{Magdalena K\k{e}dziorek}
\address[K\k{e}dziorek]{Mathematical Institute, Utrecht University}
\begin{document}

\maketitle
\begin{abstract}  We apply the Acyclicity Theorem of \cite{HKRS} (recently corrected in \cite{GKR}) to establishing the existence of model category structure on categories of coalgebras over comonads arising from simplicial adjunctions, under mild conditions on the adjunction and the associated comonad. We study three concrete examples of such adjunctions where the left adjoint is comonadic and show that in each case the component of  the derived counit  of the comparison adjunction at any fibrant object is an isomorphism, while the component of the derived unit at any 1-connected object is a weak equivalence.  To prove this last result, we explain how to construct explicit fibrant replacements for 1-connected coalgebras in the image of the canonical comparison functor from the Postnikov decompositions of their underlying simplicial sets. We also show in one case that the derived unit is precisely the Bousfield-Kan completion map. 
\end{abstract}

\tableofcontents

\section{Introduction}
Given a comonad $\K=(K, \Delta, \epsilon)$ on a model category $\sM$, it is natural to ask under what conditions the associated category $\coalg_{\K}$ of $\K$-coalgebras admits a model category structure with respect to which the (forgetful, cofree)-adjunction
$$\adjunction{\coalg_{\K}}{\sM}{U_{\K}}{F_{\K}}$$
is a Quillen pair.  If the comonad $\K$ arises from a Quillen pair
$$\adjunction{\sN}{\sM}{L}{R},$$
i.e., $\K=(LR, L\eta_{R}, \epsilon)$, where $\eta$ and $\epsilon$ are the unit and counit of the adjunction, then the existence of a model structure on $\coalg_{\K}$ enables us to formulate and study the question of homotopic codescent \cite{hess:descent} for the adjunction $L\dashv R$.  More precisely, if the model structure on $\coalg_{\K}$ is well chosen, the comparison adjunction
$$\adjunction{\sN}{\coalg_{\K},}{\mathrm{Can}_{\K}}{V_{\K}}$$
where $\mathrm{Can}_{\K}(X)=(LX, L\eta_{X})$ and $V_{\K}(Y, \delta)= \lim (\xymatrix{RY \ar@<+1ex>[r]^-{R\delta}\ar@<-0.5ex>[r]_-{\eta_{RY}} & RLRY})$, will also be a Quillen pair, so that one can seek conditions under which  the derived unit and counit are weak equivalences or, even better, under which the comparison is a Quillen equivalence.

In this article, we apply the Acyclity Theorem of \cite{HKRS} (recently corrected in \cite{GKR}, see Remark \ref{rem:corrected}) to establishing the existence of model category structure on categories of coalgebras over comonads arising from simplicial adjunctions, under mild conditions on the adjunction and the associated comonad.  We study three concrete examples of such adjunctions, where $L$ is comonadic, so that $\mathrm{Can}_{\K}$ is actually an equivalence of categories.  We show that in each case the component of  the derived counit  of the comparison adjunction at any fibrant object is an isomorphism, while the component of the derived unit at any 1-connected object is a weak equivalence.  To prove this last result, we explain how to construct explicit fibrant replacements for 1-connected coalgebras in the image of $\can_{\K}$ from the Postnikov decompositions of their underlying simplicial sets. In one case, we also show that the derived unit is precisely the Bousfield-Kan completion map.  These results refine and clarify those in \cite{BlomquistHarper1},  \cite{BlomquistHarper2}, and  \cite{BlomquistHarper3}.

We begin by introducing the three adjunctions of interest in this paper, then prove the general existence theorem for model structures on categories of coalgebras, which we apply to our three examples.

We refer the reader to Appendix \ref{coalgebras} and to \cite {MacLane} for the general theory of coalgebras over a comonad.  In Appendix \ref{appendix} we recall the notions of left- and right-induced model structures and the model structure existence results of \cite{HKRS} and \cite{GKR} that we use here.

Earlier work on special cases of model structures on categories of coalgebras can be found in  \cite{Coalgebras_Comonad_Post}, \cite {Banff1}, and \cite{HKRS}, among others.
\bigskip

\textbf{Acknowledgments.} The authors thank the referee for a careful and helpful report.

\section{A tale of three comonads and their associated coalgebras}
In this section we introduce and study three comonads $\K=(LR, \Delta, \epsilon)$ arising from adjunctions of the form 
$$\adjunction{\sset}{\sM}{L}{R},$$
where $\sset$ denotes the category of pointed simplicial sets.  
We show that the left adjoint $L$ is comonadic in each case, so that  each induced comparison adjunction
$$\adjunction{\sset}{\coalg_{\K}}{\can_{\K}}{V_{\K}}$$ 
is an equivalence of categories.

\subsection{The free abelian comonad on  $\sAb$}\label{sec:sab}
Let $\sAb$ denote the category of simplicial abelian groups. There is an adjunction
\begin{equation}\label{eqn:adj1}
\adjunction{\sset}{\sAb}{\tilZ}{\U}
\end{equation} 
where $\tilZ(X)=\mathbb{Z}(X)/\mathbb{Z}(\ast)$ is the free reduced abelian group functor, and $\U$ is the forgetful functor.  Let $\eta: \id \to \U\tilZ$ denote the unit of the adjunction.

\begin{notation} Let $A$ be a simplicial abelian group.  If $p\in (\tilZ \U A)_n$, then it is a formal integer linear combination of non-basepoint elements of level $n$ in the  simplicial set underlying $A$, which we write
$$p=\sum _{a\in A_{n}}\gamma_{a}[a],$$
where $\gamma_{a}\in \mathbb Z$ for all $a\in A_n$, and only finitely many of the $\gamma_{a}$'s are nonzero.  In terms of this notation, for every pointed simplicial set $X$, 
$$\tilZ\eta_{X}: \tilZ X \to \tilZ \U \tilZ X: \sum _{x\in X}\gamma_{x}x \mapsto \sum _{x\in X}\gamma_{x}[x].$$
\end{notation}

As recalled in Appendix \ref{coalgebras}, the adjunction (\ref{eqn:adj1}) induces a comonad $\K=(\tilZ\U, \Delta, \epsilon)$ on $\sAb$ that  factors through the category $\coalg_{\K}$ of $\K$-coalgebras, as in the diagram 

\[
\xymatrix@C=3pc {
\sset \ar@<+1ex>[rr]^{\tilZ}\ar@<-2ex>[dr]_{\can_{\K}} && \sAb \ar@<+1.1ex>[ll]^{\U} \ar@<+0.3ex>[dl]_{F_{\K}}\\
&\coalg_{\K} \ar@<-2ex>[ur]_{U_{\K}} \ar@<+0.3ex>[ul]_{V_{\K}}&
}
\]
where the left adjoints are on the outside of the diagram, $F_{\K}$ is the cofree $K$-coalgebra functor and $U_{\K}$ the forgetful functor.

Recall from Appendix \ref{coalgebras} that a $\K$-coalgebra consists of a pair $(A,\delta)$, where $A\in \sAb$, and $\delta: A\lra \tilZ\U A$ is a morphism in $\sAb$ such that for every $a\in A$
\begin{itemize}
\item (counit condition) $a=\epsilon\circ\delta(a)$,  and
\smallskip
 
\item (coassociativity condition)  $\tilZ \U \delta \circ \delta(a) = \Delta_{A}\delta(a)$.
\end{itemize}

If $(A,\delta)=(\tilZ X,\tilZ \eta_X)$ for some pointed simplicial set $X$, then $\delta$ is determined by its restriction to $X$, since it is a homomorphism of simplicial abelian groups.   Moreover, using the notation introduced above,
$$\epsilon \Big(\sum _{q\in \tilZ X} \gamma_{q} [q]\Big)= \sum _{q\in \tilZ X} \gamma_{q} q,$$
while
$$\Delta_{A} \Big(\sum _{q\in \tilZ X} \gamma_{q} [q]\Big)= \sum _{q\in \tilZ X} \gamma_{q} \big[[q]\big],$$
for every $\sum _{q\in \tilZ X} \gamma_{q} [q]\in \tilZ\U \tilZ X=\tilZ\U A$.

\begin{notation} For any $p\in \tilZ X$, we write 
$$\delta (p) = \sum _{q\in \tilZ X} \gamma_{p,q} [q].$$
\end{notation}

\begin{rem}\label{rem:delta} It follows from the counit condition that
$$p=\sum _{q\in \tilZ X} \gamma_{p,q} q,$$
while the coassociativity condition implies that
$$\sum _{q\in \tilZ X} \gamma_{p,q}\big[ [q]\big] = \sum _{q\in \tilZ X}\gamma_{p,q} \Big[\delta (q) \Big].$$
Since this is an equation in a free abelian group, and the counitality condition implies that $\delta (q) \not= \delta (q')$ if $q\not = q'$, it follows that for each $q$ such that $\gamma_{p,q}\not=0$, $\delta (q)$ must be a monomial of the form $[r]$ in $\tilZ \U \tilZ X$.  On the other hand, the counitality condition implies that if $\delta (q)=[r]$, then $r=q$.  It follows that $\delta (q)=[q]$ for every $q\in \tilZ X$ for which there exists $p\in \tilZ X$  such that $\gamma_{p,q}\not=0$.
\end{rem}

\begin{lem}\label{lem:underlying_free} If $(A,\delta)$ is a $\K$-coalgebra, then $A$ is a free simplicial abelian group.
\end{lem}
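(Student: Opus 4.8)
The plan is to exhibit an explicit free basis for $A$, namely the set of \emph{grouplike} elements $a\in A$ satisfying $\delta(a)=[a]$, and to show that these assemble into a pointed simplicial subset $X\subseteq\U A$ for which the counit induces an isomorphism $\tilZ X\xrightarrow{\ \cong\ }A$. The first step is to run the computation of Remark \ref{rem:delta}, currently stated for coalgebras of the form $(\tilZ X,\tilZ\eta_X)$, for the arbitrary $\K$-coalgebra $(A,\delta)$. Writing $\delta(a)=\sum_{b}\gamma_{a,b}[b]$ (the sum over non-basepoint $b\in A$), the counit condition gives $a=\sum_b\gamma_{a,b}\,b$ and shows that $\delta$ is injective, since $\delta(b)=\delta(b')$ forces $b=\epsilon\delta(b)=\epsilon\delta(b')=b'$. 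Spelling out the coassociativity condition in the free abelian group $\tilZ\U\tilZ\U A$ yields
$$\sum_{b}\gamma_{a,b}\,[\delta(b)]=\sum_{b}\gamma_{a,b}\,\big[\,[b]\,\big].$$
By injectivity of $\delta$ the generators $[\delta(b)]$ on the left are pairwise distinct, so for each $b$ with $\gamma_{a,b}\neq 0$ the generator $[\delta(b)]$ must appear on the right; as every right-hand generator $\big[[b']\big]$ corresponds to a \emph{monomial} $[b']\in\tilZ\U A$, this forces $\delta(b)=[b']$ for some $b'$, and then the counit condition gives $b'=\epsilon\delta(b)=b$, so $\delta(b)=[b]$. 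Hence every $b$ occurring in some $\delta(a)$ is grouplike.

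With this in hand I would set $X_n=\{a\in A_n:\delta(a)=[a]\}$, a pointed set containing $0$. Naturality of $\delta$ together with the identities $d_i[a]=[d_i a]$ and $s_j[a]=[s_j a]$ in $\tilZ\U A$ shows that the face and degeneracy operators of $A$ carry $X_n$ into $X_{n-1}$ and $X_{n+1}$, so $X=\{X_n\}$ is a pointed simplicial subset of $\U A$. The counit relation $a=\sum_b\gamma_{a,b}\,b$, combined with the first step (each such $b$ lies in $X_n$), shows that $X_n$ spans $A_n$ over $\mathbb{Z}$. For linear independence I would suppose $\sum_{b\in X_n}c_b\,b=0$ and apply $\delta$: since $\delta(b)=[b]$ for $b\in X_n$, this gives $\sum_b c_b[b]=0$ in the free abelian group $\tilZ\U A$, forcing $c_b=0$ for every non-basepoint $b$. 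Thus each $A_n$ is free abelian on $X_n\setminus\{0\}$, and the map $\tilZ X\to A$ adjoint to the inclusion $X\hookrightarrow\U A$ (equivalently, the restriction of $\epsilon$) is an isomorphism of simplicial abelian groups, so $A$ is free.

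I expect the main obstacle to be the coassociativity step: one must argue carefully inside the doubly free group $\tilZ\U\tilZ\U A$ that the only way the left-hand generators $[\delta(b)]$ can match the right-hand monomial generators $\big[[b]\big]$ is for each relevant $\delta(b)$ to be a single monomial, after which the counit condition pins it down to $[b]$. The extension of Remark \ref{rem:delta} to a general underlying object $A$ (rather than $\tilZ X$) is what makes this the crux; once it is established, spanning, independence, and compatibility with the simplicial structure follow routinely from the counit law, the freeness of $\tilZ\U A$, and the naturality of $\delta$.
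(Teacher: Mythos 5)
Your proof is correct, but it takes a much longer and more constructive route than the paper's. The paper's entire argument is one line: the counit condition $\epsilon_A\circ\delta=\id_A$ makes $\delta\colon A\to\tilZ\U A$ a (split) monomorphism of simplicial abelian groups, so each $A_n$ is a subgroup of the free abelian group $(\tilZ\U A)_n$ and is therefore free abelian. By contrast, you extract an explicit basis --- the grouplike elements $\{a:\delta(a)=[a]\}$ --- by running the coassociativity-plus-counitality computation of Remark \ref{rem:delta} for an arbitrary coalgebra $(A,\delta)$ rather than one already of the form $(\tilZ X,\tilZ\eta_X)$; your handling of that computation (distinctness of the generators $[\delta(b)]$ via injectivity of $\delta$, matching against the monomial generators $\bigl[[b]\bigr]$, then pinning down $\delta(b)=[b]$ by the counit) is sound, as are the spanning, independence, and simplicial-closure arguments. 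What your approach buys is considerably more than the lemma asserts: you obtain directly that $(A,\delta)\cong(\tilZ X,\tilZ\eta_X)$ for a pointed simplicial subset $X\subseteq\U A$, i.e.\ the conclusion the paper only reaches afterwards through Remark \ref{rem:delta}, Lemmas \ref{lem:lineary_independent} and \ref{lem:useful}, and the filtered-colimit argument of Proposition \ref{prop:filteredColim}. Indeed, your version quietly repairs a small imprecision in the paper's exposition: levelwise freeness alone does not by itself furnish a basis closed under faces and degeneracies, which is what the subsequent notation $(\tilZ X,\delta)$ presupposes, whereas your grouplike basis is manifestly a simplicial subset. The trade-off is economy: if one only wants the cofibrancy hypothesis needed for Theorem \ref{thm:leftLiftGeneral}, the paper's one-line subgroup argument suffices.
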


\begin{proof} The coaction $\delta$ is a map of simplicial abelian groups $\delta: A\lra \tilZ\U(A)$, which  is a monomorphism by the counitality condition, whence $A$ is a subgroup of a free abelian group and therefore free abelian.
\end{proof}

Though the underlying simplicial abelian group of any $\K$-coalgebra $(A,\delta)$ is free, the coaction map $\delta$ might be complicated. We show below, however, that $A$ admits a basis $X$ such that, with respect to this basis, the coaction map is given precisely by $\tilZ\eta_{X}$, so that there is an isomorphism of $\K$-coalgebras $(A,\delta)\cong (\tilZ X,\tilZ\eta_{X})$.

\begin{defn} We say that a free simplicial abelian group is of \emph{finite type} if it is finitely generated in each degree. A $\K$-coalgebra is of  \emph{finite type} if its underlying free simplicial abelian group is of finite type.
\end{defn}

\begin{notation}\label{notn:gp} Let $X$ be a pointed simplicial set. For any $p\in \tilZ X$, let 
$$G_p=\{q\in \tilZ X \mid \gamma_{p,q}\neq 0\}.$$

For any finite subset $W\subset \tilZ X$, let 
$$G_W=\{q\mid  \exists\, p\in W \text{ such that }\gamma_{p,q}\neq 0\}=\bigcup_{p\in W}G_{p}.$$
Let $\widehat G_{p}$ and $\widehat G_{W}$  denote the simplicial subsets of $\tilZ X$ generated by $G_{p}$ and $G_{W}$, i.e., their closures under faces and degeneracies.
\end{notation}

\begin{rem} Note that for any $p\in \tilZ X$, $G_p$ is a finite set of elements in $\tilZ X$ in the same dimension as $p$ and that  $\widehat G_{p}$ is therefore a finite set in every simplicial level.
\end{rem}

\begin{rem}\label{rem:delta-bis} Remark \ref{rem:delta} implies that $\delta(x)=[x]$ for all $x\in\widehat G_{W}$, since every element of $\widehat G_{W}$ is the image under iterated faces and degeneracies of an element of $G_{W}$, and $\delta$ is a simplicial map. 
\end{rem}

\begin{lem}\label{lem:lineary_independent}  Let $(\tilZ X, \delta)$ be a $\K$-coalgebra. For any finite subset $W\subset \tilZ X$, the simplicial set $\widehat G_W$ is dimensionwise linearly independent in $\tilZ X$, i.e., the subgroup of $\tilZ X$ generated by $\widehat G_{W}$ is $\tilZ \widehat G_{W}$.
\end{lem}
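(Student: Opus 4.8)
The statement asks us to show that the non-basepoint simplices making up $\widehat G_W$ are $\mathbb{Z}$-linearly independent in each simplicial degree of $\tilZ X$; equivalently, that the canonical homomorphism $\tilZ \widehat G_W \to \tilZ X$ carrying each generator $[x]$ to the element $x\in\tilZ X$ (obtained by applying $\tilZ$ to the inclusion $\widehat G_W\hookrightarrow \U\tilZ X$ and postcomposing with the counit $\epsilon$) is a monomorphism. My plan is to deduce this directly from the coaction $\delta$, exploiting the normalization recorded in Remark \ref{rem:delta-bis}.

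First I would fix a degree $n$ and suppose we are given a $\mathbb{Z}$-linear dependence $\sum_{i=1}^{k}c_i x_i=0$ in $(\tilZ X)_n$, where $x_1,\dots,x_k$ are pairwise distinct non-basepoint elements of $(\widehat G_W)_n$. The idea is to apply the degree-$n$ component $\delta_n\colon (\tilZ X)_n\to (\tilZ\U\tilZ X)_n$, which is a homomorphism of abelian groups. By Remark \ref{rem:delta-bis} we have $\delta(x_i)=[x_i]$ for each $i$ (this is exactly where the closure of $G_W$ under faces and degeneracies, together with the simpliciality of $\delta$, is used), and of course $\delta_n(0)=0$. Additivity of $\delta_n$ then yields $\sum_{i=1}^{k}c_i[x_i]=0$ in $(\tilZ\U\tilZ X)_n$.

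To finish, I would observe that the $[x_i]$ are the images of the pairwise distinct non-basepoint $n$-simplices $x_i$ of the simplicial set $\U\tilZ X$ under its canonical inclusion into the free reduced abelian group, hence are distinct basis elements of $(\tilZ\U\tilZ X)_n$. A relation among distinct basis elements of a free abelian group forces every coefficient to vanish, so $c_i=0$ for all $i$, which is precisely the asserted dimensionwise linear independence.

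I do not expect a genuine obstacle here: the entire content of the lemma has been front-loaded into Remarks \ref{rem:delta} and \ref{rem:delta-bis}, which pin down the values of $\delta$ on all of $\widehat G_W$. The only point requiring a moment's care is that the identity $\delta(x)=[x]$ must be invoked for the whole of $\widehat G_W$ and not merely for $G_W$; this is exactly what Remark \ref{rem:delta-bis} supplies, and the possibility that some face of an element of $G_W$ degenerates to the basepoint $0$ causes no trouble, since there $[0]=0=\delta(0)$ is consistent and such basepoint terms are simply discarded. Granting this, the argument reduces to a single application of the additivity of $\delta$.
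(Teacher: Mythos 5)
Your argument is correct and is essentially identical to the paper's own proof: both apply the additive map $\delta$ to a hypothetical dependence relation, invoke Remark \ref{rem:delta-bis} to rewrite $\delta(q)=[q]$ for $q\in\widehat G_W$, and conclude from the linear independence of the distinct basis elements $[q]$ in the free abelian group $\tilZ\U\tilZ X$. Your extra remark about basepoint degeneracies is a harmless added precaution, not a divergence.
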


\begin{proof} Remark \ref{rem:delta-bis} implies that  if
$$0=\sum_{q\in \widehat G_{W}} \alpha_q q,$$
then
$$0=\delta\big(\sum _{q\in \widehat G_{W}}\alpha_{q} q\big)=\sum _{q\in \widehat G_{W}}\alpha_{q}[q].$$
Since the $q$'s are pairwise distinct, which implies that $\big\{[q]\mid q\in \widehat G_{W}\big\}$ is linearly independent,  it follows that $\alpha_{q}=0$ for all $q\in \widehat G_{W}$.
\end{proof}

\begin{lem}\label{lem:useful} Let $(\tilZ X, \delta)$ be a $\K$-coalgebra.  For any finite subset $W\subset \tilZ X$,
\begin{enumerate}
\item $p\in \tilZ \widehat G_W$ for all $p\in W$, and
\smallskip

\item $\delta|_{\tilZ\widehat G_{W}}=\tilZ \eta _{\widehat G_{W}}$.
\end{enumerate}
\end{lem}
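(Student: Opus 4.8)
The plan is to derive both statements directly from the counit condition and from Remark~\ref{rem:delta-bis}, using Lemma~\ref{lem:lineary_independent} to guarantee that $\tilZ\widehat G_{W}$ sits inside $\tilZ X$ as a sub-simplicial-abelian-group, so that the restriction in (2) even makes sense.

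For part (1), I would fix $p\in W$ and invoke the counit condition as recorded in Remark~\ref{rem:delta}, which gives $p=\sum_{q\in\tilZ X}\gamma_{p,q}q=\sum_{q\in G_{p}}\gamma_{p,q}q$. By Notation~\ref{notn:gp} we have $G_{p}\subseteq G_{W}\subseteq\widehat G_{W}$, so every $q$ occurring in this sum is an element of $\widehat G_{W}$; hence $p$ lies in the subgroup of $\tilZ X$ generated by $\widehat G_{W}$. Lemma~\ref{lem:lineary_independent} identifies that subgroup with $\tilZ\widehat G_{W}$, whence $p\in\tilZ\widehat G_{W}$.

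For part (2), the key observation is that both $\delta|_{\tilZ\widehat G_{W}}$ and $\tilZ\eta_{\widehat G_{W}}$ are homomorphisms of simplicial abelian groups, so it suffices to check that they agree on the generators $q\in\widehat G_{W}$. For such a generator, Remark~\ref{rem:delta-bis} gives $\delta(q)=[q]$, while by the defining formula for the unit we have $\tilZ\eta_{\widehat G_{W}}(q)=[q]$; the two therefore coincide once we identify $\tilZ\U\tilZ\widehat G_{W}$ with its image in $\tilZ\U\tilZ X$ under the monomorphism induced by the inclusion $\widehat G_{W}\hookrightarrow\tilZ X$. In particular this computation also shows that $\delta$ carries $\tilZ\widehat G_{W}$ into $\tilZ\U\tilZ\widehat G_{W}$, so the restriction is well defined with the asserted codomain.

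I do not expect a genuine obstacle here: the real content was already extracted in Remark~\ref{rem:delta-bis} (that $\delta$ acts as the unit on $\widehat G_{W}$) and in Lemma~\ref{lem:lineary_independent} (dimensionwise linear independence). The only point demanding care is the bookkeeping of the various free abelian groups — namely verifying via Lemma~\ref{lem:lineary_independent} that $\tilZ\widehat G_{W}$ really is a subobject of $\tilZ X$, and keeping track of the induced inclusion of codomains $\tilZ\U\tilZ\widehat G_{W}\hookrightarrow\tilZ\U\tilZ X$ against which the equality of maps in (2) is to be read.
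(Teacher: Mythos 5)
Your proposal is correct and follows essentially the same route as the paper: part (1) from the counit condition plus the inclusion $G_p\subseteq G_W\subseteq\widehat G_W$, and part (2) by extending the identity $\delta(q)=[q]$ of Remark~\ref{rem:delta-bis} linearly over $\tilZ\widehat G_W$. Your extra bookkeeping — citing Lemma~\ref{lem:lineary_independent} to identify the subgroup generated by $\widehat G_W$ with $\tilZ\widehat G_W$, and noting that $\delta$ lands in $\tilZ\U\tilZ\widehat G_W$ so the restriction has the asserted codomain — is left implicit in the paper but is exactly the right point to make explicit.
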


Note that (2) implies that $(\tilZ\widehat G_{W}, \tilZ \eta _{\widehat G_{W}})$ is a sub $\K$-coalgebra of $(\tilZ X, \delta)$.

\begin{proof}
(1) This is a consequence of counitality, which implies that $p=\sum _{q\in G_{W}} \gamma_{p,q}q$.
\smallskip

\noindent (2) It follows from Remark \ref{rem:delta-bis} that 
$$\delta \big(\sum_{q\in \widehat G_{W}} \alpha_q q\big)=\sum_{q\in \widehat G_{W}} \alpha_q [q]=\tilZ \eta_{\widehat G_{W}}\big(\sum_{q\in \widehat G_{W}} \alpha_q q\big)$$ 
for all finite sets $\{\alpha_{q}\mid q\in \widehat G_{W}\}\subset \mathbb Z$.
\end{proof}

\begin{prop}\label{prop:filteredColim} Every $\K$-coalgebra is isomorphic to a filtered colimit of finite type $\K$-coalgebras in the image of the comparison functor $\can_{\K}: \sset \to \coalg_{\K}$ and is therefore in the essential image of $\can_{\K}$.
\end{prop}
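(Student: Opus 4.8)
The plan is to exhibit an arbitrary $\K$-coalgebra as a filtered colimit, indexed by the finite subsets of its underlying set, of the finite type sub-coalgebras supplied by Lemma \ref{lem:useful}, and then to deduce membership in the essential image of $\can_{\K}$ from the fact that $\can_{\K}$ preserves colimits. First I would invoke Lemma \ref{lem:underlying_free} to reduce to a coalgebra of the form $(\tilZ X, \delta)$, so that Notation \ref{notn:gp} and Lemma \ref{lem:useful} apply verbatim. I would then take as index category the poset $\sP$ of finite subsets $W\subset \tilZ X$ ordered by inclusion, which is filtered since the union of two finite sets is finite. As $W\subseteq W'$ forces $G_W\subseteq G_{W'}$ and hence $\widehat G_W\subseteq \widehat G_{W'}$, the assignment $W\mapsto (\tilZ\widehat G_W, \tilZ\eta_{\widehat G_W})$ is a functor $\sP\to \coalg_{\K}$ whose structure maps are the evident inclusions; these are morphisms of coalgebras precisely because $(\tilZ\widehat G_W,\tilZ\eta_{\widehat G_W})$ is a sub-$\K$-coalgebra of $(\tilZ X,\delta)$, as noted after Lemma \ref{lem:useful}. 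Each value is of finite type, since the remark after Notation \ref{notn:gp} shows $\widehat G_W$ is finite in every simplicial degree, and each value equals $\can_{\K}(\widehat G_W)$, hence lies in the image of $\can_{\K}$.

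Next I would compute the colimit of this diagram. The forgetful functor $U_{\K}\colon\coalg_{\K}\to\sAb$ is a left adjoint and, as always holds for the forgetful functor from the coalgebras over a comonad, it creates colimits; it therefore suffices to form $\colim_{W}\tilZ\widehat G_W$ in $\sAb$, where a filtered colimit of subobjects is their directed union. Applying Lemma \ref{lem:useful}(1) with $W=\{p\}$ shows that every $p\in\tilZ X$ lies in $\tilZ\widehat G_{\{p\}}$, so $\bigcup_W \tilZ\widehat G_W=\tilZ X$; and Lemma \ref{lem:useful}(2) shows that the coaction induced on this union restricts to $\delta|_{\tilZ\widehat G_W}$ on each term and hence coincides with $\delta$. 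Thus $\colim_W (\tilZ\widehat G_W,\tilZ\eta_{\widehat G_W})\cong (\tilZ X,\delta)$ in $\coalg_{\K}$, which is the first assertion. For the second, since $\can_{\K}$ is a left adjoint (with right adjoint $V_{\K}$) it preserves the filtered colimit $X'=\colim_W\widehat G_W$ formed in $\sset$, so that $\can_{\K}(X')\cong(\tilZ X,\delta)$ and the coalgebra lies in the essential image of $\can_{\K}$.

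The one genuinely external ingredient, and the point I would be most careful to justify, is that $U_{\K}$ creates filtered colimits of coalgebras: this is what licenses computing the colimit in $\sAb$ and concluding that the induced coaction is exactly $\delta$ rather than some a priori larger structure. Everything else is an assembly of Lemma \ref{lem:useful} with the filteredness of $\sP$ and the covering property that every element of $\tilZ X$ already appears in one of the finite pieces $\tilZ\widehat G_{W}$.
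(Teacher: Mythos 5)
Your proposal is correct and follows essentially the same route as the paper: the same filtered poset of finite subsets $W\subset\tilZ X$ and the same diagram $W\mapsto(\tilZ\widehat G_W,\tilZ\eta_{\widehat G_W})$ of finite type sub-coalgebras supplied by Lemma \ref{lem:useful}. The only divergence is in how the colimit is identified: the paper checks the universal property of $(\tilZ X,\delta)$ by hand, defining the induced map by $\widehat\varphi(p)=\varphi_{\{p\}}(p)$, whereas you invoke the standard (and correct) fact that the forgetful functor from coalgebras over a comonad creates colimits and then compute the directed union in $\sAb$ --- the external fact you rightly flag is exactly the dual of the creation of limits by the forgetful functor from Eilenberg--Moore algebras, so your verification is sound.
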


\begin{proof} Let $(\tilZ X, \delta)$ be a $\K$-coalgebra.  Let $\mathcal P_{X}$ denote the poset of finite subsets of $\tilZ X$, which is a filtered category, and consider the functor 
$$\mathbb G: \mathcal P_{X}\to \coalg_{\K}: W \mapsto  (\tilZ \widehat G_{W}, \tilZ \eta _{\widehat G_{W}}).$$
We claim that 
$$(\tilZ X, \delta)\cong \colim_{\mathcal P_{X}} \mathbb G.$$
To prove this claim, we show that $(\tilZ X, \delta)$ satisfies the required universal property. 

Let $\iota_{W}: \mathbb G(W) \hookrightarrow (\tilZ X, \delta)$ denote the sub $\K$-coalgebra inclusion.  Given a family of $\K$-coalgebra maps $ \varphi_{W}:\mathbb G(W) \to (A', \delta')$ for all $W\in \mathcal P_{X}$ that are compatible with the inclusions $W\subset W'$, there is a  unique $\K$-coalgebra map $\widehat\varphi: (\tilZ X, \delta) \to (A', \delta')$ such that $\widehat \varphi \circ \iota_{W}= \varphi_{W}$, specified for any $p\in \tilZ X$ by $\widehat\varphi (p)= \varphi _{\{p\}} (p)$.
\end{proof}

\begin{thm}\label{thm:equivalence_of_cats} The adjunction 
\[\adjunction{\sset}{\coalg_{\K} }{\can_{\K}}{V_{\K}}\]
is an equivalence of categories.
\end{thm}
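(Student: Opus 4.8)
The plan is to prove that $\can_{\K}$ is fully faithful and essentially surjective. Since $\can_{\K}$ is a left adjoint, once these two properties are established it is an equivalence of categories whose right adjoint $V_{\K}$ is automatically a quasi-inverse, so that the displayed adjunction is an (adjoint) equivalence. Essential surjectivity is exactly the content of Proposition \ref{prop:filteredColim}, which states that every $\K$-coalgebra lies in the essential image of $\can_{\K}$. Everything therefore reduces to showing that $\can_{\K}$ is full and faithful on morphisms.

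Faithfulness is formal. The composite $U_{\K}\circ\can_{\K}$ equals the free functor $\tilZ\colon\sset\to\sAb$, which is faithful since a pointed simplicial map $g$ agrees with $\tilZ g$ on the basis $X$; as $\tilZ=U_{\K}\circ\can_{\K}$ is faithful, so is $\can_{\K}$.

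For fullness I would fix pointed simplicial sets $X,Y$ and a $\K$-coalgebra morphism $f\colon(\tilZ X,\tilZ\eta_{X})\to(\tilZ Y,\tilZ\eta_{Y})$, that is, a homomorphism $f\colon\tilZ X\to\tilZ Y$ of simplicial abelian groups satisfying $\tilZ\eta_{Y}\circ f=\tilZ\U(f)\circ\tilZ\eta_{X}$. Both sides are homomorphisms out of the free object $\tilZ X$, so it suffices to evaluate them on a generator $x\in X$. Writing $f(x)=\sum_{y\in Y}c_{x,y}\,y$, the left-hand side yields $\sum_{y}c_{x,y}[y]$ while the right-hand side yields $[f(x)]$, with the convention $[0]=0$, so the compatibility condition becomes the equation
\[
\sum_{y\in Y}c_{x,y}[y]=[f(x)]
\]
in $\tilZ\U\tilZ Y$. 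This is an identity in the free abelian group on the basis $\{[q]\mid 0\neq q\in\tilZ Y\}$, of exactly the type already analyzed in Remark \ref{rem:delta}: the right-hand side is a single basis monomial (or $0$), while the left-hand side is supported on the distinct basis elements $[y]$ indexed by generators. Comparing coefficients forces either $f(x)=0$ or $f(x)=y_{0}$ for a single generator $y_{0}$ with $c_{x,y_{0}}=1$; in either case $f(x)$ is a generator or the basepoint of $\tilZ Y$.

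It follows that $f$ carries the basis $X$ into $Y\subseteq\tilZ Y$, so that $g(x)=f(x)$ defines a pointed map of simplicial sets $g\colon X\to Y$, which is simplicial because $f$ is, and which satisfies $\tilZ g=f$ by construction. Hence $f=\can_{\K}(g)$, proving fullness. The main obstacle is precisely this fullness step: one must verify that the coalgebra compatibility condition rigidifies an a priori arbitrary map of simplicial abelian groups into one induced by a map of simplicial sets. Once phrased as the monomial comparison above, however, it is the morphism-level analogue of the argument already carried out in Remark \ref{rem:delta} and is entirely elementary.
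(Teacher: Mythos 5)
Your proof is correct, and it takes a noticeably different route from the paper's on the full-faithfulness half of the argument. The paper observes that the unit of the adjunction $\can_{\K}\dashv V_{\K}$ is an isomorphism (computing $V_{\K}(\tilZ X,\tilZ\eta_X)$ as the equalizer of $\eta_{\U\tilZ X}$ and $\U\tilZ\eta_X$, which is $X$ itself), deduces faithfulness from this, and then attributes essential surjectivity \emph{and} fullness to Proposition \ref{prop:filteredColim}; the step from that proposition to fullness is left implicit (the cleanest reading is that an invertible unit already gives full faithfulness of the left adjoint, by the standard characterization for adjunctions). You instead get faithfulness from the identity $U_{\K}\circ\can_{\K}=\tilZ$ and prove fullness by hand: the coalgebra compatibility condition $\tilZ\U(f)\circ\tilZ\eta_X=\tilZ\eta_Y\circ f$, evaluated on a generator $x$, becomes the monomial identity $\sum_y c_{x,y}[y]=[f(x)]$ in the free abelian group $\tilZ\U\tilZ Y$, which forces $f(x)\in Y\cup\{0\}$ and hence $f=\tilZ g$ for a pointed simplicial map $g$. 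This is exactly the morphism-level analogue of Remark \ref{rem:delta}, and both routes use Proposition \ref{prop:filteredColim} for essential surjectivity. What your version buys is an explicit, self-contained verification of fullness where the paper is terse; what the paper's version buys is brevity, since the unit computation disposes of full faithfulness in one line. Both arguments are sound.
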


\begin{proof} First notice that since the unit of the adjunction is an isomorphism
\[
\xymatrix@C=4pc {
V_{\K}(\tilZ X,\tilZ\eta_X):= \mathrm{equal} \big( \U \tilZ X \ar@<+1ex>[r]^-{\eta_{\U\tilZ X}}\ar@<-0.5ex>[r]_-{\U\tilZ\eta_X} & \U \tilZ\U\tilZ X \big)\cong X,
}
\]
the functor $\can_{\K}$ is faithful. 
It follows from Proposition \ref{prop:filteredColim} that $\can_{\K}$ is essentially surjective and full, thus it is an equivalence of categories.

\end{proof}

\subsection{Comonads arising from suspension}
In studying the comonads introduced in this section, we apply the following classical theorem, which is the dual of the weak version of Beck's Monadicity Theorem \cite[Theorem 4.4.4]{BorceuxHandbook2}.

\begin{thm}\label{thm:DualMonadicityThm}
Let $\adjunction{\sN}{\sM}{L}{R}$ be an adjunction with associated comonad $\K=(LR, \Delta, \epsilon)$ on $\sM$.  

If the left adjoint $L$ preserves coreflexive equalizers and reflects isomorphisms, then $L$ is comonadic, i.e., the comparison adjunction 
$$\adjunction{\sN}{\coalg_{\K}}{\can_{\K}}{V_{\K}}$$ 
is an equivalence of categories.
\end{thm}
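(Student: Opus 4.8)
The plan is to show that the unit $\eta^{c}\colon \id_{\sN}\to V_{\K}\can_{\K}$ and the counit $\epsilon^{c}\colon \can_{\K}V_{\K}\to \id_{\coalg_{\K}}$ of the comparison adjunction $\can_{\K}\dashv V_{\K}$ are natural isomorphisms, which is equivalent to $\can_{\K}$ being an equivalence. The single structural fact underlying everything is the dual bar resolution: every $\K$-coalgebra $(A,\delta)$ is the equalizer of its cofree resolution. Concretely, the coassociativity and counit laws show that $\delta$ equalizes the pair $LR\delta,\ L\eta_{RA}\colon LRA\to LRLRA$ (here $L\eta_{RA}=\Delta_{A}$), and in fact
\[
A \xrightarrow{\ \delta\ } LRA \egal{LR\delta}{L\eta_{RA}} LRLRA
\]
is a \emph{split} equalizer, with splitting given by the counit maps $\epsilon_{A}$ and $\epsilon_{LRA}$: the counit law of $(A,\delta)$ gives $\epsilon_{A}\delta=\id_{A}$, a triangle identity gives $\epsilon_{LRA}L\eta_{RA}=\id$, and naturality of $\epsilon$ gives $\delta\epsilon_{A}=\epsilon_{LRA}LR\delta$. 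I would first record this, and separately observe that the parallel pair $R\delta,\ \eta_{RA}\colon RA\to RLRA$ defining $V_{\K}(A,\delta)$ is \emph{coreflexive}, with common retraction $R\epsilon_{A}$ (again by the counit law and a triangle identity). Thus each $V_{\K}(A,\delta)$ is computed by a coreflexive equalizer in $\sN$, exactly the class of limits the hypotheses control.

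For the counit, I fix $(A,\delta)$ and set $B:=V_{\K}(A,\delta)=\mathrm{equal}\big(RA \egal{R\delta}{\eta_{RA}} RLRA\big)$, with equalizer map $b\colon B\to RA$. As this is a coreflexive equalizer, the hypothesis that $L$ preserves coreflexive equalizers makes
\[
LB \xrightarrow{\ Lb\ } LRA \egal{LR\delta}{L\eta_{RA}} LRLRA
\]
an equalizer in $\sM$. But the split equalizer displayed above exhibits $\delta\colon A\to LRA$ as an equalizer of the \emph{same} parallel pair. Uniqueness of equalizers then yields a canonical isomorphism $LB\xrightarrow{\cong}A$ over $LRA$, which I would identify with the underlying map of $\epsilon^{c}_{(A,\delta)}$; since $U_{\K}$ reflects isomorphisms, $\epsilon^{c}_{(A,\delta)}$ is an isomorphism of coalgebras, and in particular $\can_{\K}$ is essentially surjective. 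Note that this half uses only the preservation hypothesis.

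For the unit, I fix $X\in\sN$ and note that naturality of $\eta$ at the morphism $\eta_{X}$ shows $\eta_{X}\colon X\to RLX$ equalizes the pair $RL\eta_{X},\ \eta_{RLX}$ defining $V_{\K}\can_{\K}X$, so $\eta_{X}$ factors through $\eta^{c}_{X}\colon X\to V_{\K}\can_{\K}X$. Applying $L$, which preserves this coreflexive equalizer, identifies $L(V_{\K}\can_{\K}X)$ as the equalizer of $LRL\eta_{X},\ L\eta_{RLX}\colon LRLX\to LRLRLX$; on the other hand, the split equalizer associated to the coalgebra $\can_{\K}X=(LX,L\eta_{X})$ exhibits $L\eta_{X}\colon LX\to LRLX$ as an equalizer of that same pair. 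Hence $L\eta^{c}_{X}$ is an isomorphism, and since $L$ reflects isomorphisms, so is $\eta^{c}_{X}$. This is the one place where the reflection hypothesis is genuinely needed, since the comparison map is only visibly invertible after applying $L$.

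The main obstacle—the only step needing care beyond bookkeeping—is verifying that the isomorphisms produced by uniqueness of equalizers really are the components of $\epsilon^{c}$ and $\eta^{c}$, rather than merely some isomorphisms; this amounts to chasing the triangle identities through the relevant universal properties. A preliminary point is to ensure that the coreflexive equalizers defining $V_{\K}$ exist in $\sN$, so that the comparison adjunction is defined in the first place. Everything else is precisely the dual of Beck's argument, specialized to coreflexive rather than $L$-split equalizers.
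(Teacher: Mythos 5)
Your argument is correct: it is the standard proof of the dual of the weak (``crude'') form of Beck's monadicity theorem, via the split equalizer attached to each $\K$-coalgebra, the coreflexivity of the pair defining $V_{\K}$, and the reflection of isomorphisms to transfer invertibility of $L\eta^{c}_{X}$ back to $\eta^{c}_{X}$. The paper gives no proof of this statement, citing it instead as the dual of \cite[Theorem 4.4.4]{BorceuxHandbook2}, so your proposal simply supplies the classical argument being invoked and matches it in every essential respect.
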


\begin{notation} Fix  a simplicial model $S^{1}$ of the circle.  Let 
$$\Sigma= -\wedge S^{1}: \sset\to \sset\text{ and }\Omega = \Map (S^{1},-): \sset \to \sset.$$
Let $S^{r}=(S^{1})^{\wedge r}.$

Let $\K_{r}$ denote the comonad associated to the adjunction $$\adjunction{\sset}{\sset}{\Sigma^r}{\Omega^r}$$ and $\K_{\infty}$ the comonad associated to the adjunction $$\adjunction{\sset}{\spe,}{\Sigma^\infty}{\Omega^\infty}$$ where $\spe$ denotes the category of symmetric spectra \cite{HoveySSSymSpectra}.
\end{notation}

\begin{prop}\label{thm:comonadic_r}
For every $r$, the left adjoint in the adjunction
\[\adjunction{\sset}{\sset}{\Sigma^r}{\Omega^r}\]
is comonadic, and thus the adjunction 
\[\adjunction{\sset}{\coalg_{\K_{r}}}{\can_{\K_{r}}}{V_{\K_{r}}}\]
is an equivalence of categories.
\end{prop}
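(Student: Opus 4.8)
The plan is to verify the two hypotheses of Theorem~\ref{thm:DualMonadicityThm} for $L=\Sigma^{r}$: that it reflects isomorphisms and preserves coreflexive equalizers. First I would reduce to the case $r=1$. Since $\Sigma^{r}$ is the $r$-fold composite of $\Sigma=-\wedge S^{1}$, and both properties pass to composites---a composite of conservative functors is conservative, and a composite of functors preserving coreflexive equalizers again preserves them, using that $\Sigma$, being a functor, carries a coreflexive pair $f,g\colon A\rightrightarrows B$ with common section $t$ to the coreflexive pair $\Sigma f,\Sigma g$ with common section $\Sigma t$---it suffices to treat $\Sigma$ itself.

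For preservation of coreflexive equalizers, I would exploit that limits in $\sset$ are computed dimensionwise in pointed sets and that $(\Sigma X)_{n}=X_{n}\wedge (S^{1})_{n}$, so the claim reduces to showing that smashing with a fixed pointed set $K$ preserves equalizers of pointed sets. Writing $\widetilde{K}$ for the non-basepoint elements of $K$, there is a natural identification $A\wedge K\cong \bigvee_{k\in\widetilde K}A$, under which $f\wedge K$ and $g\wedge K$ act independently on each wedge summand. Hence the equalizer of $f\wedge K$ and $g\wedge K$ is $\bigvee_{k\in\widetilde K}E\cong E\wedge K$, where $E$ is the equalizer of $f$ and $g$; concretely, for $k\neq\ast$ one has $(f(a),k)=(g(a),k)$ in $B\wedge K$ if and only if $f(a)=g(a)$. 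The canonical comparison $\Sigma E\to\mathrm{eq}(\Sigma f,\Sigma g)$ is therefore a dimensionwise bijection, hence an isomorphism. (In fact this shows $\Sigma$ preserves all equalizers, not merely coreflexive ones.)

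For reflecting isomorphisms, suppose $\Sigma f$ is an isomorphism, i.e.\ a bijection in every dimension. On non-basepoint simplices the map $(\Sigma f)_{n}$ sends $(x,s)\mapsto (f_{n}(x),s)$, collapsing to the basepoint exactly when $f_{n}(x)=\ast$. Fixing any non-basepoint $s\in(S^{1})_{n}$ (which exists for $n\ge 1$), bijectivity of $(\Sigma f)_{n}$ forces $f_{n}$ to be injective, surjective, and to send non-basepoint simplices to non-basepoint simplices; thus $f_{n}$ is a bijection for all $n\ge 1$. The bottom dimension carries no such $s$, so I would recover bijectivity of $f_{0}$ separately from the simplicial structure: the degeneracy $s_{0}\colon X_{0}\to X_{1}$ is a split monomorphism and $f_{1}s_{0}=s_{0}f_{0}$, from which injectivity of $f_{0}$ follows immediately, while surjectivity follows by lifting $s_{0}y$ along the bijection $f_{1}$ and applying a face map. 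Hence $f$ is a bijection in every dimension, i.e.\ an isomorphism.

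I expect the one genuinely non-formal point to be this last step: because $(S^{1})_{0}$ is a single point, $\Sigma$ destroys all information in dimension $0$, so conservativity cannot be checked purely dimensionwise and must be rescued using the simplicial identities (and, after reducing via composition, the analogous issue is confined to dimension $0$ rather than spread across the dimensions below $r$). The remaining care is bookkeeping with basepoints in the smash product when identifying the equalizer.
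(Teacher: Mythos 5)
Your proof is correct and follows essentially the same route as the paper: both verify the two hypotheses of Theorem~\ref{thm:DualMonadicityThm} by showing that smashing with a pointed simplicial set preserves all equalizers (computed levelwise via the wedge decomposition of the smash product) and that $\Sigma^r$ is conservative by testing against non-basepoint simplices of the sphere, using degeneracies to recover the dimensions where the sphere has none. Your reduction to $r=1$ and your explicit degeneracy argument for surjectivity in dimension $0$ are minor organizational refinements of the paper's version, which works with $\Sigma^r$ directly, applies $s_0$ iteratively to lift low-dimensional simplices for injectivity, and leaves surjectivity to the reader.
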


\begin{proof}
An easy computation shows that smashing with any pointed simplicial set preserves all equalizers, so, in particular, suspension preserves coreflexive equalizers. 


It is also clear that $\Sigma^r$ reflects isomorphisms. Indeeed, suppose $f:X \lra Y$ is a morphism of pointed simplicial sets such that $\Sigma^r f$ is an isomorphism. We want to show that $f$ is an isomorphism. It is easy to see that $f$ has to be surjective. 

To verify that $f$ is also injective, suppose that there exist $x,x' \in X_{n}$ such that $f(x)=f(x')$.  Without loss of generality, we can suppose that $n\geq r$, since we can replace $x$ and $x'$ with  $s_{0}^{r-n}x$ and $s_{0}^{r-n}x'$ otherwise.
For every $a\in S_{n}^r$, 
$$\Sigma^rf[x,a]=[f(x),a]=[f(x'),a]=\Sigma^rf[x',a]$$ 
and therefore, since $\Sigma^r f$ has an inverse, $[x,a]= [x',a] \in X\wedge S^{r}$. Considering any $a$ that is not the basepoint, which must exist since $n\geq r$, we conclude that $x=x' \in X$ and thus that $f$ is a monomorphism. It follows that $f$ is an isomorphism and thus that $\Sigma^r$ reflects isomorphisms. 

Theorem \ref{thm:DualMonadicityThm} therefore implies that $\Sigma^r$ is comonadic, i.e.,equalizer that the comparison adjunction is an equivalence of categories.
\end{proof}

Since limits and colimits in the category $\spe$  of symmetric spectra are calculated objectwise, and the functor $\Sigma^\infty$ reflects isomorphisms, the next result is obtained by a similar analysis of equalizers as in the proof of Proposition \ref{thm:comonadic_r}.

\begin{cor} The left adjoint in the adjunction
\[\adjunction{\sset}{\spe}{\Sigma^\infty}{\Omega^\infty}\]
is comonadic and therefore
\[\adjunction{\sset}{\coalg_{\K_{\infty}}}{\can_{\K_{\infty}}}{V_{\K_{\infty}}}\]
is an equivalence of categories.
\end{cor}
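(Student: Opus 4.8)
The plan is to invoke the Dual Monadicity Theorem (Theorem \ref{thm:DualMonadicityThm}) exactly as in the proof of Proposition \ref{thm:comonadic_r}, so it suffices to check that $\Sigma^\infty$ preserves coreflexive equalizers and reflects isomorphisms. The key structural input, already flagged in the remark preceding the statement, is that both limits and isomorphisms in $\spe$ are detected levelwise: an equalizer of a pair of maps of symmetric spectra is computed level by level in $\sset$ (compatibly with the symmetric group actions), and a map of symmetric spectra is an isomorphism precisely when each of its levels is an isomorphism of pointed simplicial sets.

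First I would treat the preservation of equalizers. Recall that $(\Sigma^\infty X)_n = X\wedge S^n$, functorially in $X$, where $S^n=(S^1)^{\wedge n}$ carries the permutation action of the symmetric group. Given any coreflexive equalizer diagram in $\sset$ with equalizer $E\hookrightarrow X\rightrightarrows Y$, I want to show that $\Sigma^\infty E$ is the equalizer of $\Sigma^\infty f$ and $\Sigma^\infty g$ in $\spe$. Since equalizers in $\spe$ are formed levelwise, at level $n$ this is the equalizer of $f\wedge S^n$ and $g\wedge S^n$ in $\sset$. By the computation already recorded in the proof of Proposition \ref{thm:comonadic_r}, namely that smashing with any pointed simplicial set preserves all equalizers, this levelwise equalizer is $E\wedge S^n=(\Sigma^\infty E)_n$, and these identifications are natural in $n$. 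Hence $\Sigma^\infty$ preserves equalizers, and in particular coreflexive ones.

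Next I would verify that $\Sigma^\infty$ reflects isomorphisms. Suppose $f:X\to Y$ is a map of pointed simplicial sets such that $\Sigma^\infty f$ is an isomorphism in $\spe$. Because isomorphisms of symmetric spectra are levelwise, $(\Sigma^\infty f)_n=f\wedge S^n$ is an isomorphism for every $n$; taking $n=1$ shows that $\Sigma f=f\wedge S^1$ is an isomorphism. Since $\Sigma=\Sigma^1$ reflects isomorphisms, as established in the proof of Proposition \ref{thm:comonadic_r}, it follows that $f$ is itself an isomorphism. With both hypotheses of Theorem \ref{thm:DualMonadicityThm} verified, I conclude that $\Sigma^\infty$ is comonadic, whence the comparison adjunction $\can_{\K_\infty}\dashv V_{\K_\infty}$ is an equivalence of categories.

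I expect the only genuinely delicate point to be the bookkeeping of the symmetric group actions: one must make sure that the levelwise equalizers assemble into an honest object of $\spe$ and that the natural isomorphism $E\wedge S^n\cong(\Sigma^\infty E)_n$ is equivariant, so that the levelwise equalizer really computes the equalizer in $\spe$ rather than merely in the underlying category of sequences. Everything else is a direct transcription of the suspension argument used for finite $r$.
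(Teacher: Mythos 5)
Your proposal is correct and follows essentially the same route as the paper: the paper's proof likewise observes that limits in $\spe$ are computed objectwise and that $\Sigma^\infty$ reflects isomorphisms, and then reduces the equalizer analysis to the levelwise smash computation from the proof of Proposition \ref{thm:comonadic_r} before invoking Theorem \ref{thm:DualMonadicityThm}. Your version merely spells out the levelwise/equivariance bookkeeping that the paper leaves implicit.
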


\section{Model categories of coalgebras over comonads}

In this section we provide conditions under which the category of coalgebras over a simplicial comonad inherits a model structure from the underlying category.  We then revisit the three examples from the previous section, proving in each case that the comparison adjunction 
$$\adjunction{\sset}{\coalg_{\K}}{\can _{\K}}{V_{\K}},$$ 
which is a Quillen pair with respect to this inherited structure on the category of $\K$-coalgebras, 
is such that the components of the derived unit and counit at appropriately connected objects are weak equivalences. In case of the free abelian comonad, we upgrade this to a Quillen equivalence on the full subcategories of $1$-reduced objects and show that the derived unit map is precisely the Bousfield-Kan $\mathbb Z$-completion map.

\subsection{The general existence theorem}

The proof of the following result is an easy exercise in the theory of enriched categories.

\begin{prop} Let $\sN$ and $\sM$ be categories that are enriched and tensored over $\sset$.  If  $\adjunction{\sN}{\sM}{L}{R}$ is an $\sset$-adjunction, with associated comonad $\K$ on $\sM$, then $\coalg_{\K}$ is also enriched and tensored over $\sset$ in such a way that the forgetful functor $U_{\K}: \coalg_{\K} \lra \sM$  preserves the tensoring and the induced comparison adjunction 
$$\adjunction{\sN}{\coalg_{\K}}{\can_{\K}}{V_{\K}}$$ 
is an $\sset$-adjunction. 
\end{prop}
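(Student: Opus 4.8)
The plan is to exploit the fact that, since $L\dashv R$ is an $\sset$-adjunction, both $L$ and $R$ are $\sset$-functors and the unit $\eta$ and counit $\epsilon$ are $\sset$-natural, so that $\K=(LR,\Delta,\epsilon)$ is an $\sset$-comonad on $\sM$. The only feature of $\K$ I really need is the standard consequence of enrichment that any $\sset$-functor $F$ between $\sset$-categories tensored over $\sset$ carries a canonical colax comparison map
$$\theta^{F}_{A,S}\colon FA\otimes S\lra F(A\otimes S),$$
$\sset$-natural in $A$ and $S$, namely the adjunct of $S\to\Map_{\sM}(A,A\otimes S)\to\Map_{\sM}(FA,F(A\otimes S))$. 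These maps compose along composites of $\sset$-functors, and every $\sset$-natural transformation commutes with them. Applying this to $F=\K$ and writing $\theta_{A,S}:=\theta^{\K}_{A,S}$, the compatibility of $\theta$ with the $\sset$-natural transformations $\epsilon$ and $\Delta$ records exactly that
$$\epsilon_{A\otimes S}\circ\theta_{A,S}=\epsilon_A\otimes S\quad\text{and}\quad \Delta_{A\otimes S}\circ\theta_{A,S}=\K\theta_{A,S}\circ\theta_{\K A,S}\circ(\Delta_A\otimes S).$$

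First I would define the tensoring on $\coalg_{\K}$ by
$$(A,\delta)\otimes S:=\big(A\otimes S,\ \theta_{A,S}\circ(\delta\otimes S)\big),$$
so that $U_{\K}$ preserves the tensoring by construction. Writing $\delta_S=\theta_{A,S}\circ(\delta\otimes S)$, the counit axiom $\epsilon_{A\otimes S}\circ\delta_S=\id$ follows from $\epsilon_{A\otimes S}\circ\theta_{A,S}=\epsilon_A\otimes S$ and the counit axiom for $\delta$, while the coassociativity axiom follows from the second displayed compatibility together with the $\sset$-naturality of $\theta$ (applied to the map $\delta$) and coassociativity of $\delta$; both are short diagram chases. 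Functoriality in $S$ and the coherence isomorphisms $(A,\delta)\otimes(S\wedge T)\cong\big((A,\delta)\otimes S\big)\otimes T$ are inherited from those of $\sM$ because $U_{\K}$ is faithful and preserves them.

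Next I would equip $\coalg_{\K}$ with an $\sset$-enrichment, defining $\Map_{\coalg_{\K}}\big((A,\delta),(B,\gamma)\big)$ to be the equalizer in $\sset$ of the two maps
$$\Map_{\sM}(A,B)\rightrightarrows\Map_{\sM}(A,\K B)$$
given by post-composition with $\gamma$ and by the enriched action of $\K$ followed by restriction along $\delta$ (i.e.\ $f\mapsto\K f\circ\delta$). Composition and units are induced from those of $\sM$. One then checks the tensor–hom adjunction
$$\Map_{\coalg_{\K}}\big((A,\delta)\otimes S,(B,\gamma)\big)\cong\Map_{\sset}\big(S,\Map_{\coalg_{\K}}((A,\delta),(B,\gamma))\big)$$
by unwinding the two equalizers and invoking the corresponding adjunction in $\sM$, which simultaneously exhibits $U_{\K}$ as an $\sset$-functor.

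Finally I would verify that $\can_{\K}\dashv V_{\K}$ is an $\sset$-adjunction. Since $\can_{\K}X=(LX,L\eta_X)$ is built from the $\sset$-functor $L$ and $V_{\K}(B,\gamma)=\mathrm{eq}\big(RB\rightrightarrows R\K B\big)$ from the $\sset$-functor $R$ and an $\sset$-limit, both are $\sset$-functors. For the $\sset$-natural hom-isomorphism I would compute
$$\Map_{\coalg_{\K}}\big(\can_{\K}X,(B,\gamma)\big)=\mathrm{eq}\Big(\Map_{\sM}(LX,B)\rightrightarrows\Map_{\sM}(LX,\K B)\Big),$$
transport both parallel maps across the $\sset$-natural isomorphism $\Map_{\sM}(LX,-)\cong\Map_{\sN}(X,R-)$ supplied by the given adjunction, and use the naturality of $\eta$ (and of the adjunction isomorphism) to identify the transported pair with $\Map_{\sN}(X,R\gamma)$ and $\Map_{\sN}(X,\eta_{RB})$; since $\Map_{\sN}(X,-)$ preserves equalizers, the resulting equalizer is $\Map_{\sN}(X,V_{\K}(B,\gamma))$, $\sset$-naturally in all variables. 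The only content-bearing steps are the coaction axioms for $\delta_S$ and this identification of the transported pair; the main obstacle is purely bookkeeping — keeping $\theta$ and the unit/counit naturality squares straight so that every diagram commutes on the nose — which is why the statement is an easy exercise in enriched category theory.
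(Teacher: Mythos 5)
Your proposal is correct and is precisely the standard enriched-category-theory argument that the paper leaves as an ``easy exercise'': the colax comparison maps $\theta^{F}_{A,S}$ induce the tensoring on $\coalg_{\K}$ so that $U_{\K}$ preserves it by construction, the hom-objects are the evident equalizers, and the enriched adjunction isomorphism for $\can_{\K}\dashv V_{\K}$ follows by transporting the defining equalizer across $\Map_{\sM}(LX,-)\cong\Map_{\sN}(X,R-)$. All the key verifications you sketch (the counit/coassociativity axioms for $\delta_S$ via the compatibility of $\theta$ with $\epsilon$ and $\Delta$, and the identification of the transported parallel pair with $\Map_{\sN}(X,R\gamma)$ and $\Map_{\sN}(X,\eta_{RB})$) check out.
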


We refer the reader to \cite{HKRS} for an explanation of the notion of an accessible model category, which is used in the next theorem, but remark that every cofibrantly generated and locally presentable model category is accessible.

\begin{thm}\label{thm:leftLiftGeneral}
Let $\sM,\ \sN$ be accessible $\sset$-model categories.  For every simplicial Quillen adjunction 
$$\adjunction{\sN}{\sM}{L}{R}$$
with associated comonad $\K$ on $\sM$ such that the forgetful functor $U_{\K}: \coalg_{\K} \lra \sM$ takes every $\K$-coalgebra to a cofibrant object of $\sM$,
the category $\coalg_{\K}$ admits a model structure left-induced by the forgetful functor $U_{\K}: \coalg_{\K} \lra \sM$.
Moreover, the diagonal adjunction on the left of the commuting diagram
\[
\xymatrix@C=3pc {
\sN \ar@<+1ex>[rr]^{L}\ar@<-2.5ex>[dr]_{\can_{\K}} && \sM \ar@<+1.1ex>[ll]^{R} \ar@<+0.3ex>[dl]_{F_{\K}}\\
&\quad\coalg_{\K}\quad \ar@<-2ex>[ur]_{U_{\K}} \ar@<+1ex>[ul]_{V_{\K}}&
}
\]
is a simplicial Quillen pair.
\end{thm}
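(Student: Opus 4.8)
The plan is to produce the model structure by left induction along the forgetful functor $U_{\K}$, and then to read off the Quillen property of $\can_{\K}$ almost formally. First I would check that we are in the situation governed by the Acyclicity Theorem of \cite{HKRS}, corrected in \cite{GKR}, applied to the adjunction $U_{\K}\dashv F_{\K}$ with $U_{\K}$ as the left adjoint. Since $\sM$ is an accessible model category, its underlying category is locally presentable; the left adjoint $L$ is cocontinuous and the right adjoint $R$, being a right adjoint between locally presentable categories, is accessible, so the comonad $\K=LR$ is an accessible endofunctor of $\sM$. As recalled in Appendix~\ref{appendix}, the category of coalgebras over an accessible comonad on a locally presentable category is again locally presentable, so $\coalg_{\K}$ is locally presentable and the structural hypotheses of the Acyclicity Theorem hold.

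In the left-induced structure a map $f$ of $\coalg_{\K}$ is a cofibration, respectively a weak equivalence, exactly when $U_{\K}f$ is one in $\sM$, while the fibrations are detected by the right lifting property against the trivial cofibrations. By the Acyclicity Theorem the only remaining point is the acyclicity condition: every map of $\coalg_{\K}$ with the right lifting property (RLP) against all cofibrations is carried by $U_{\K}$ to a weak equivalence of $\sM$.

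This verification is the crux of the argument, and it is precisely where the standing hypothesis is used. Because $U_{\K}$ is a left adjoint it preserves the initial object, and since it sends every coalgebra to a cofibrant object, the map $\varnothing\to C$ from the initial object is a cofibration for every $C$; that is, every object of $\coalg_{\K}$ is cofibrant. Let $p\colon C\to D$ have the RLP against all cofibrations. Lifting $p$ against the cofibration $\varnothing\to D$ yields a section $s\colon D\to C$ with $ps=\mathrm{id}_D$. By the preceding proposition $\coalg_{\K}$ is tensored over $\sset$ with $U_{\K}$ preserving the tensoring, so, since $\sM$ satisfies the pushout--product axiom and $U_{\K}C$ is cofibrant, the boundary inclusion $C\otimes\partial\Delta[1]_{+}\to C\otimes\Delta[1]_{+}$ maps under $U_{\K}$ to a cofibration of $\sM$ and is therefore a cofibration of $\coalg_{\K}$. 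Lifting $p$ against it, with $(\mathrm{id}_C,sp)$ on the top and the projection followed by $p$ on the bottom, produces a simplicial homotopy $\mathrm{id}_C\simeq sp$. Applying the simplicial functor $U_{\K}$ then exhibits $U_{\K}p$ as a simplicial homotopy equivalence between cofibrant objects of $\sM$; as $A\otimes\Delta[1]_{+}$ is a cylinder object for cofibrant $A$, such a map is a weak equivalence. This establishes acyclicity, and the Acyclicity Theorem delivers the left-induced model structure.

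It remains to treat the diagonal adjunction $\can_{\K}\dashv V_{\K}$. The preceding proposition already shows it is an $\sset$-adjunction, so I only need $\can_{\K}$ to be left Quillen, and this is formal: cofibrations and weak equivalences of $\coalg_{\K}$ are created by $U_{\K}$, and $U_{\K}\can_{\K}=L$ is left Quillen by hypothesis, so $\can_{\K}$ preserves cofibrations and trivial cofibrations. I expect the acyclicity step to be the sole genuine obstacle: the reduction to the Acyclicity Theorem and the Quillen property of $\can_{\K}$ are formal, whereas acyclicity is exactly where the cofibrancy hypothesis on $U_{\K}$ is indispensable, both to extract the section $s$ and to promote the simplicial homotopy equivalence $U_{\K}p$ to a weak equivalence.
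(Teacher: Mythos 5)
Your proposal is correct and follows essentially the same route as the paper: the paper simply cites Theorem \ref{thm:2.2.1} (the cylinder object argument from \cite{HKRS}), whose hypotheses are verified exactly as you do — every coalgebra is cofibrant by assumption, and good cylinders arise by tensoring with the cylinder for $S^0$ because $U_{\K}$ preserves the tensoring — whereas you inline that theorem's proof by running the section-plus-simplicial-homotopy acyclicity argument directly. The treatment of local presentability of $\coalg_{\K}$ and of the Quillen pair $\can_{\K}\dashv V_{\K}$ is the same in both.
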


\begin{proof} Since the category $\coalg_{\K}$ is locally presentable by \cite[Proposition A.1.]{ChingRiehl}, we can apply Theorem \ref{thm:2.2.1}. Since the object of $\sM$ underlying any $\K$-coalgebra is  cofibrant, it is enough to show that  a good cylinder object exists for any object $X \in \coalg_{\K}$.
Because $\sM$ is a simplicial model category,  cylinder objects there can be constructed by tensoring with the cylinder object for the simplicial unit $S^0$; see Remark \ref{rem:tensored_cylinder}. 
Since $U_{\K}$ preserves tensoring with simplicial sets, good cylinder objects in $\coalg_{\K}$ can also be constructed by tensoring with the good cylinder object of $S^0$ and therefore the desired left-induced model structure on $\coalg_{\K}$ exists.

To show that the adjunction 
$$\adjunction{\sN}{\coalg_{\K}}{\can_{\K}}{V_{\K}}$$
is a Quillen pair, it is enough to check that the left adjoint preserves cofibrations and acyclic cofibrations, which is an immediate consequence of the definition of these two classes of maps in $\coalg_{\K}$ and of the commutativity of the left adjoints in the diagram above.
\end{proof}

Below we consider again the three examples  of comonads induced by simplicial Quillen pairs from the previous section,  where $\sN$ is the category $\sset$ of pointed simplicial sets endowed with the usual Kan-Quillen model structure.

\subsection{Homotopy and the free abelian comonad on $\sAb$}

Recall the comonad  $\K=(\tilZ \U,\Delta,\epsilon)$  on $\sAb$ from section \ref{sec:sab}, which gives rise to the following commuting diagram of adjunctions
\[
\xymatrix@C=3pc {
\sset \ar@<+1ex>[rr]^{\tilZ}\ar@<-2.5ex>[dr]_{\can_{\K}} && \sAb \ar@<+1.1ex>[ll]^{\U} \ar@<+0.3ex>[dl]_{F_{\K}}\\
&\quad\coalg_{\K}\quad \ar@<-2ex>[ur]_{U_{\K}} \ar@<+1ex>[ul]_{V_{\K}}&
}
\]
where the left adjoints are on the outside of the diagram.

\subsubsection{Model structure}

By Theorem \ref{thm:equivalence_of_cats}, the diagonal adjunction on the left is an equivalence of categories. Therefore there exists  a model structure on $\coalg_{\K}$ that is right-induced from the Kan-Quillen model structure on $\sset$, i.e., weak equivalences and fibrations are created by $V_{\K}$. When $\coalg_{\K}$ is equipped with this model structure, the adjunction $\can_{\K}\dashv V_{\K}$ is not only an equivalence of categories but also a Quillen equivalence, albeit of a rather uninteresting sort. 

Thanks to Lemma \ref{lem:underlying_free}, the adjunction 
\[\adjunction{\sset}{\sAb}{\tilZ}{\U}\] 
satisfies the hypotheses of Theorem \ref{thm:leftLiftGeneral}, so that the following result holds.

\begin{cor} \label{cor:left_ind_mod_str} There is a model structure on the category $\coalg_{\K}$ left-induced by the forgetful functor from the projective model structure on $\sAb$
\[\adjunction{\coalg_{\K}}{\sAb}{U_{\K}}{F_{\K}}\]
such that the adjunction
\[\adjunction{\sset}{\coalg_{\K}}{\can_{\K}}{V_{\K}}\]
is a Quillen pair.
\end{cor}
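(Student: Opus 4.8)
The plan is to obtain this corollary as a direct application of Theorem~\ref{thm:leftLiftGeneral} to the adjunction $\tilZ \dashv \U$, so that the entire proof reduces to checking that this adjunction meets the three hypotheses of that theorem. The first hypothesis, that $\sset$ (Kan--Quillen) and $\sAb$ (projective) are accessible $\sset$-model categories, I would dispatch quickly: both are cofibrantly generated and locally presentable, hence accessible by the remark preceding the theorem, and both carry their standard simplicial enrichment and tensoring over pointed simplicial sets. The second hypothesis, that $\tilZ \dashv \U$ is a \emph{simplicial} Quillen adjunction, is essentially built in: $\tilZ$ and $\U$ are simplicial functors forming an $\sset$-adjunction, and since the projective model structure on $\sAb$ is by definition right-induced from $\sset$ along $\U$, the functor $\U$ preserves fibrations and acyclic fibrations, so $\U$ is right Quillen and $\tilZ$ is left Quillen.

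The real content lies in the third hypothesis, namely that $U_{\K}$ sends every $\K$-coalgebra to a cofibrant object of $\sAb$, and this is precisely where Lemma~\ref{lem:underlying_free} enters. That lemma tells us the underlying simplicial abelian group $A$ of any $\K$-coalgebra $(A,\delta)$ is free, i.e., $A_n$ is free abelian for every $n$. It then remains to invoke the standard fact that a degreewise-free simplicial abelian group is cofibrant in the projective model structure. I would justify this via Dold--Kan: the normalized chain complex $N(A)$ is a degreewise direct summand of $A$, hence degreewise free, and a bounded-below complex of free (projective) $\Z$-modules is cofibrant in the projective model structure on $\mathrm{Ch}_{\geq 0}(\Z)$; transporting back along the Dold--Kan equivalence shows $A$ is cofibrant in $\sAb$.

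Alternatively---and more cheaply---one can bypass the Dold--Kan step using Proposition~\ref{prop:filteredColim}: since every $\K$-coalgebra is isomorphic to $\can_{\K}(X) = (\tilZ X, \tilZ\eta_X)$ for some pointed simplicial set $X$, its underlying object is $\tilZ X$, which is the image under the left Quillen functor $\tilZ$ of the (automatically) cofibrant object $X$ and is therefore cofibrant. Either route establishes the third hypothesis.

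With the three hypotheses verified, Theorem~\ref{thm:leftLiftGeneral} delivers the left-induced model structure on $\coalg_{\K}$ and the statement that $\can_{\K} \dashv V_{\K}$ is a simplicial Quillen pair, which is exactly the assertion of the corollary. I expect the only genuine subtlety to be the cofibrancy step: confirming that degreewise freeness of $A$ (as supplied by Lemma~\ref{lem:underlying_free}), rather than merely freeness of $A$ as an abelian group, is what is needed and what yields projective cofibrancy. The Dold--Kan argument above, or the even simpler appeal to Proposition~\ref{prop:filteredColim}, resolves this, and everything else is bookkeeping.
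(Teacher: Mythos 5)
Your proposal is correct and follows exactly the paper's route: the paper likewise obtains the corollary by observing that, thanks to Lemma~\ref{lem:underlying_free}, the adjunction $\tilZ \dashv \U$ satisfies the hypotheses of Theorem~\ref{thm:leftLiftGeneral}. You simply spell out in more detail the verification (in particular the cofibrancy of degreewise-free simplicial abelian groups) that the paper leaves implicit.
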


The projective model structure on $\sAb$ has weak equivalences and fibrations the underlying weak equivalences and Kan fibrations of simplicial sets. It is therefore right-induced by $\U$ from the Kan-Quillen model structure on $\sset$.

Recall that in the left-induced model structure the cofibrations and weak equivalences are created by $U_{\K}$. Retracts of limits of towers of images under $F_{\K}$ of fibrations (acyclic fibrations respectively) are fibrations (respectively, acyclic fibrations) in $\coalg_{\K}$ \cite{Banff1}.

\begin{thm}\label{thm:derived_unit_counit_we} Consider the adjunction 
\[\adjunction{\sset}{\coalg_{\K}.}{\can_{\K}}{V_{\K}}\]
If $\coalg_{\K}$ is  equipped with the left-induced model structure of Corollary \ref{cor:left_ind_mod_str} and $\sset$ with the usual Kan-Quillen model structure, then
both the component of the derived counit at any fibrant $\K$-coalgebra and the component of the derived unit at any 1-connected simplicial set are weak equivalences. 
\end{thm}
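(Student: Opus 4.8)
The plan is to handle the counit and the unit separately, using throughout that $\can_{\K}\dashv V_{\K}$ is an equivalence of categories (Theorem \ref{thm:equivalence_of_cats}) and that every object of $\sset$ is cofibrant in the Kan--Quillen structure.

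For the derived counit, note that since the comparison adjunction is an equivalence of categories, its counit $\epsilon\colon\can_{\K}V_{\K}\Rightarrow\Id$ is a natural isomorphism. Given a fibrant $\K$-coalgebra $(A,\delta)$, I would compute $\mathbb{R}V_{\K}(A,\delta)$ by taking its fibrant replacement to be the identity, and then compute $\mathbb{L}\can_{\K}$ of the result by taking the cofibrant replacement of $V_{\K}(A,\delta)$ in $\sset$ to be the identity. The derived counit at $(A,\delta)$ is then literally $\epsilon_{(A,\delta)}$, hence an isomorphism; this settles the counit statement (indeed with \emph{isomorphism} in place of \emph{weak equivalence}).

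For the derived unit, I would first reduce to an explicit map. Because the unit $\eta\colon\Id\Rightarrow V_{\K}\can_{\K}$ is a natural isomorphism (established in the proof of Theorem \ref{thm:equivalence_of_cats}) and $X$ is cofibrant, the derived unit at $X$ equals $V_{\K}(j)\circ\eta_{X}$, where $j\colon\can_{\K}X=(\tilZ X,\tilZ\eta_{X})\to R\can_{\K}X$ is a fibrant replacement in $\coalg_{\K}$. Thus it suffices to exhibit a convenient fibrant replacement and to show that $V_{\K}(j)$ is a weak homotopy equivalence. This is where the Postnikov decomposition enters: for $1$-connected $X$ I would build $R\can_{\K}X$ as the limit of a tower of $\K$-coalgebras assembled from the Postnikov stages $P_{n}X$, whose successive fibers are the Eilenberg--Mac Lane spaces $K(\pi_{n}X,n)$, arranging the tower so that its limit is fibrant in the left-induced structure via the description of (acyclic) fibrations as retracts of limits of towers of $F_{\K}$-images of (acyclic) fibrations recorded after Corollary \ref{cor:left_ind_mod_str}. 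The decisive point is then to identify $V_{\K}$ applied to this tower with the Bousfield--Kan $\mathbb{Z}$-completion tower $\{\Tot_{s}(\mathbb{Z}^{\bullet+1}X)\}_{s}$, so that $V_{\K}(j)$ becomes the completion map $X\to\mathbb{Z}_{\infty}X$.

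Finally, I would invoke the classical fact that for a $1$-connected, hence nilpotent, simplicial set the completion map $X\to\mathbb{Z}_{\infty}X$ is a weak homotopy equivalence, which gives that $V_{\K}(j)$, and therefore the derived unit, is a weak equivalence. The main obstacle is the construction in the previous paragraph: producing the explicit fibrant replacement out of the Postnikov tower, proving that it is genuinely fibrant in the left-induced model structure, and matching $V_{\K}$ of it with the $\mathbb{Z}$-completion. The $1$-connectedness hypothesis is precisely what makes the Postnikov tower behave well and the completion map an equivalence, so I expect it to be used essentially in both the fibrancy argument and the final comparison.
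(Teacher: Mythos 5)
Your treatment of the derived counit matches the paper's: since $\can_{\K}\dashv V_{\K}$ is an equivalence, the counit is an isomorphism, and fibrancy of the coalgebra plus cofibrancy of all simplicial sets lets you take both replacements to be identities. Your setup for the derived unit is also the paper's: reduce to $V_{\K}(j)\circ\eta_X$ for a fibrant replacement $j$ of $\can_{\K}X$, and build that replacement from a Postnikov tower whose stages and structure maps, after applying $\can_{\K}$, land in the image of $F_{\K}$ (the essential point being that $K(A,n)=A\otimes\tilZ(S^n)$ and the path-space fibrations can be taken in $\sAb$, so that $\can_{\K}(\U B)\cong F_{\K}(B)$ applies).

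The gap is in your endgame. You propose to identify $V_{\K}$ of the Postnikov-derived tower with the Bousfield--Kan tower $\{\Tot_s(\mathbb{Z}^{\bullet+1}X)\}_s$ and then quote the fact that $X\to\mathbb{Z}_\infty X$ is an equivalence for nilpotent $X$. No such identification holds at the level of towers: the Postnikov tower of $X$ and the $\Tot$-tower of the cosimplicial $\mathbb{Z}$-resolution are genuinely different objects, and the paper only relates the derived unit to $\mathbb{Z}$-completion in a \emph{separate} proposition, using a \emph{different} fibrant replacement (the restricted totalization of the cobar construction $C^{\bullet}(\can_{\K}X)$), not the Postnikov one. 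Moreover the detour is unnecessary: since $V_{\K}\can_{\K}\cong\Id$ and $\can_{\K}$, being an equivalence, preserves limits, $V_{\K}$ applied to the limit of the $\can_{\K}$-image of the tower is simply $\lim_n X\langle n\rangle$, so $V_{\K}(j)$ is the Postnikov convergence map $X\to\lim_n X\langle n\rangle$, a weak equivalence for any $1$-connected $X$ by standard facts (and because $\can_{\K}$, being left Quillen with all objects cofibrant, preserves all weak equivalences, so $j$ really is a fibrant replacement). Replacing your completion argument by this observation closes the gap and recovers the paper's proof.
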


\begin{proof} The component of derived counit at any fibrant $Y$ is the same as the counit of the $\can_{\K}\dashv V_{\K}$-adjunction and thus an isomorphism. 
To show that the component of the derived unit at a 1-connected simplicial set $X$ is a weak equivalence, we construct an explicit fibrant replacement of $\can_{\K}(X)$ in $\coalg_{\K}$ from a Postnikov tower  for $X$ built out of simplicial abelian groups and fibrations (of simplicial abelian groups) between them. Note that since $\can_{\K}$ is an equivalence of categories, it preserves limits.

For any abelian group $A$, there is a simplicial model $K(A,n)$ of the Eilenberg-MacLane space of type $(A,n)$,  given by the levelwise tensor product $A\otimes \tilZ(S^n)$.  Note that $K(A,n)$ is the simplicial set underlying a simplicial abelian group.

Consider the usual Postnikov tower of $X$. Since $X$ is 1-connected, we can start at level $2$, where we have
$$X\langle2\rangle:=K(\pi_2(X),2).$$

The next level is given by a pullback square
\[
\xymatrix@C=3pc{X\langle3\rangle \ar[r] \ar[d] & PK(\pi_3(X),4) \ar[d] \\
X\langle2\rangle \ar[r] & K(\pi_3(X),4) }
\]
where the right hand map is a Kan fibration underlying a fibration of simplicial abelian groups, since the path space is obtained from a factorization (in $\sAb$)
\[
\xymatrix@C=3pc{0\ \ar@{^{(}->}[r]^(.3){\sim} & PK(\pi_3(X),4) \ar@{->>}[r] & K(\pi_3(X),4).}
\]

The procedure continues in a similar manner, giving rise to a tower of maps

\[
\xymatrix@C=2pc{\cdots \ar@{->}[r] \  X\langle4\rangle \ar@{->}[r] & X\langle3\rangle \ar@{->}[r] & X\langle2\rangle \\
& & X\ar[ull] \ar[ul]  \ar[u] }
\]
where every horizontal map is the pullback of a Kan fibration underlying a fibration of simplicial abelian groups, and there is a weak equivalence $X \lra \lim_n X\langle n\rangle$.
Applying $\can_{\K}$ to this tower, we obtain a diagram

\[
\xymatrix@C=2pc{\cdots \ar@{->}[r] \can_{\K}X\langle4\rangle \ar@{->}[r] & \can_{\K} X\langle3\rangle \ar@{->}[r] & \can_{\K}X\langle2\rangle\\
& & \can_{\K} X \ar[ull] \ar[ul]  \ar[u]}
\]
in $\coalg_{\K}$ where $\lim_n \can _{\K} X\langle n\rangle\cong\can _{\K} (\lim_n X\langle n\rangle)$, since $\can_{\K}$ is an equivalence of categories.  It follows that the natural map 
$$\can_{\K} X \to \lim_n \can _{\K} X\langle n\rangle$$
is a weak equivalence because $\can _{\K}$ is left Quillen and therefore preserves all weak equivalences, since all objects in $\sset$ are cofibrant.

Note that $\can_{\K} X\langle2\rangle$ is fibrant  in $\coalg_{\K}$, since it is in the image of $F_{\K}$. Similarly,  by construction the horizontal maps are fibrations in $\coalg_{\K}$, since they are pullbacks of maps arising from applying $F_{\K}$ to fibrations in $\sAb$. Since $\can _{\K}$ preserves all weak equivalences and limits, it follows that $\can_{\K} X \lra \can_{\K} (\lim_n X\langle n\rangle)$ is a fibrant replacement in $\coalg_{\K}$ for $\can _{\K}X$.  A model of the derived unit of the $\can_{\K}\dashv V_{\K}$-adjunction for our simply connected $X$ is therefore given by
$$X\xrightarrow \cong V_{\K}\can_{\K}X \to V_{\K}\can_{\K}(\lim_n X\langle n\rangle)\cong \lim_n X\langle n\rangle,$$
which we already know is a weak equivalence.
\end{proof}

We can upgrade the result above to a Quillen equivalence between the categories of $1$-reduced simplicial sets and $1$-reduced $\K$-coalgebras. 
First notice that since the categories of $1$-reduced simplicial sets ($\sset_1$), $1$-reduced simplicial abelian groups ($\sAb_1$), and $1$-reduced $K$-coalgebras ($\coalg_{\K,1}$) are locally presentable, and the inclusion functors 
$$\sset_1 \lra \sset$$
$$\sAb_1 \lra \sAb$$
$$\coalg_{\K,1} \lra \coalg_{\K}$$
preserve all limits and colimits, they have both left and right adjoints. Moreover the right adjoints restrict to identity functors on the full subcategories of $1$-reduced objects.

Since all the functors on $\sset$, $\sAb$, and $\coalg_{\K}$ are defined degreewise, there is an induced diagram of functors restricted to the $1$-reduced objects.

\[
\xymatrix@C=3pc {
\sset_1 \ar@<+1ex>[rr]^{\tilZ}\ar@<-2ex>[dr]_{\can_{\K}} && \sAb_1 \ar@<+1.1ex>[ll]^{\U} \ar@<+0.3ex>[dl]_{F_{\K}}\\
&\qquad\coalg_{\K, 1}\qquad\ar@<-2ex>[ur]_{U_{\K}} \ar@<+0.3ex>[ul]_{V_{\K}}&
}
\]

\begin{lem}\label{lem:left_ind_to_1red} There exist model structures on the full subcategories of $1$-reduced objects in $\sM$ left--induced by the inclusion functor $\sM_1 \lra \sM$, when $\sM$ is $\sset,\ \sAb,$ or  $\coalg_{\K}$, with respect to the Kan-Quillen model structure on $\sset$, the usual (projective) model structure on $\sAb$, and the model structure of Corollary \ref{cor:left_ind_mod_str}  on $\coalg_{\K}$. 
\end{lem}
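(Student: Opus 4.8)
The plan is to apply the left-induction existence result, Theorem \ref{thm:2.2.1}, to the inclusion functor $i\colon \sM_1 \lra \sM$ in each of the three cases, exactly as in the proof of Theorem \ref{thm:leftLiftGeneral}. The two structural hypotheses are already recorded in the discussion preceding the statement: the subcategory $\sM_1$ is locally presentable, and $i$ preserves all colimits and admits a right adjoint $R$ (indeed both adjoints), so $i$ is a left adjoint into the accessible $\sset$-model category $\sM$. By definition the left-induced cofibrations and weak equivalences on $\sM_1$ are the maps that $i$ sends to cofibrations, respectively weak equivalences, of $\sM$; since $i$ is fully faithful with $Ri\cong\id$, this is the expected notion. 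Everything therefore reduces to verifying the acyclicity condition of Theorem \ref{thm:2.2.1}.

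As in the proof of Theorem \ref{thm:leftLiftGeneral}, I would check acyclicity by exhibiting good cylinder objects for cofibrant objects of $\sM_1$. Each of $\sset$, $\sAb$, and $\coalg_{\K}$ is a simplicial model category --- for $\coalg_{\K}$ this is part of the general existence result, via its enrichment over $\sset$ --- so a cylinder object for a cofibrant $X$ is obtained by tensoring $X$ with a fixed good cylinder object $\widetilde I$ of $S^0$, as in Remark \ref{rem:tensored_cylinder}. The essential observation, signalled by the remark that all the functors involved are defined degreewise, is that $1$-reducedness is a degreewise vanishing condition preserved by this tensoring: tensoring with a pointed simplicial set is computed levelwise and has $X_n$ (respectively its underlying object in degree $n$) as a smash/tensor factor, so the result is trivial in every degree in which $X$ is. Hence $X\otimes\widetilde I$ again lies in $\sM_1$, and $i$ carries the factorization $X\sqcup X \to X\otimes\widetilde I \to X$ to a good cylinder object in $\sM$. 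For $\sM=\sset$ and $\sM=\coalg_{\K}$ every object is cofibrant --- in the latter case because the underlying object of every $\K$-coalgebra is free, hence cofibrant in the projective structure on $\sAb$, by Lemma \ref{lem:underlying_free} and the verification in Corollary \ref{cor:left_ind_mod_str} --- so this immediately produces good cylinder objects for all objects, and the acyclicity condition holds.

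The one case requiring more care, and the step I expect to be the main obstacle, is $\sM=\sAb$ with its projective model structure, since there not every object is cofibrant (for instance $K(\mathbb Z/2,2)\in\sAb_1$ is not), so tensoring does not directly yield a good cylinder object for an arbitrary $1$-reduced simplicial abelian group. To handle this I would supply a functorial cofibrant replacement on $\sAb_1$, available because the cofibrant replacement functor of the projective structure on $\sAb$ --- built degreewise from free simplicial abelian groups --- sends $1$-reduced objects to $1$-reduced objects and hence restricts to $\sAb_1$; composing it with the tensor cylinder of the resulting cofibrant object then verifies acyclicity in this case as well. With the acyclicity condition established in all three cases, Theorem \ref{thm:2.2.1} yields the desired left-induced model structures on $\sset_1$, $\sAb_1$, and $\coalg_{\K,1}$.
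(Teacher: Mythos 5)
Your treatment of $\sset_1$ and $\coalg_{\K,1}$ is exactly the paper's: apply Theorem~\ref{thm:2.2.1}, note that smashing/tensoring a $1$-reduced object with any pointed simplicial set stays $1$-reduced so the tensor cylinder of Remark~\ref{rem:tensored_cylinder} lives in $\sM_1$, and use that every object is cofibrant. You also correctly identify the real obstacle in the $\sAb$ case, namely that not every $1$-reduced simplicial abelian group is projectively cofibrant. But your proposed repair there has a genuine gap. Theorem~\ref{thm:2.2.1} hypothesizes that $V(X)$ is cofibrant for \emph{every} object $X$ of $\sN$; this is not a cosmetic convenience but is what the underlying acyclicity argument uses (one needs $\emptyset\to X$ to be a left-induced cofibration for all $X$ in order to extract a section of any map in $\RLP(V^{-1}\Cof)$ before running the cylinder homotopy). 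Precomposing with a cofibrant replacement does not let you invoke the theorem as stated; you would have to formulate and prove a strengthened acyclicity lemma (section up to cofibrant replacement, homotopy over the cylinder of the replacement, then saturation of weak equivalences). That can probably be done, but it is a new argument, not a citation. Moreover, your justification that the cofibrant replacement functor preserves $1$-reducedness is not right as stated: the standard functorial replacement from the small object argument attaches cells $\tilZ(\partial\Delta^n)\to\tilZ(\Delta^n)$ and thereby introduces nonzero groups in degrees $0$ and $1$, so it does \emph{not} restrict to $\sAb_1$; one would have to construct a different, degreewise free resolution (e.g.\ via Dold--Kan) and check its functoriality and $1$-reducedness.

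The paper avoids all of this for $\sAb_1$ by a different route: it invokes Theorem~\ref{thm:square} for the square of adjunctions formed by $\tilZ\dashv\U$ and the inclusions $\sset_1\hookrightarrow\sset$, $\sAb_1\hookrightarrow\sAb$. Since the left-induced structure on $\sset_1$ has already been established (where all objects are cofibrant), the square theorem delivers the left-induced structure on $\sAb_1$ directly, with no cofibrancy hypothesis on objects of $\sAb_1$. You should either switch to that argument or supply the missing strengthened acyclicity lemma and a genuinely $1$-reduced functorial cofibrant replacement.
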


\begin{proof}Since all the categories in question are locally presentable, we can use the methods of \cite{HKRS} to lift the model structures.

In cases of $\sset$ and $\coalg_{\K}$, we apply Theorem \ref{thm:2.2.1}. Note that in both cases if $X$ is $1$-reduced, then the good cylinder object for $X$ given by tensoring with the good cylinder object for $S^0$ is a construction in the subcategory of $1$-reduced objects (Remark \ref{rem:tensored_cylinder}). This follows from the fact that when a $1$-reduced simplicial set is smashed with any simplicial set, the result is $1$-reduced, since the unique element in degree $1$ of a $1$-reduced simplicial set is the degeneracy of the basepoint in degree $0$. Thus in both cases there exists a good cylinder object for $X$ in the subcategory of $1$-reduced objects.
Since all objects in $\sset$ and $\coalg_{\K}$ are cofibrant, we can conclude for $\sM=\sset$ and  $\sM=\coalg_{\K}$.

The left-induced model structure on $\sAb_1$ exists by Theorem \ref{thm:square}, since there is a square of adjunctions 
\[ \xymatrix@R=4pc@C=4pc{ \sset \ar@{}[r]|-{\perp} \ar@<-1ex>[d]_-{\tilZ} \ar@<-1ex>[r] & \sset_1  \ar@<1ex>[d]^-{\tilZ} \ar@<-1ex>[l]_-i \\  
\ar@{}[r]|-{\top} \ar@{}[u]|-{\dashv} {\sAb} \ar@<1ex>[r] \ar@<-1ex>[u]_-{\U} & \sAb_1 \ar@<1ex>[u]^-{\U} \ar@<1ex>[l]^-i\ar@{}[u]|-{\vdash} }\] 
where the left adjoints commute and $i\U\cong \U i$, while the previous argument provides a left-induced model structure on $\sset_1$. In fact  Theorem \ref{thm:square} provides two (potentially identical) lifted model structures on $\sAb_1$, one right- and one left-lifted. 
\end{proof}

\begin{thm} The adjunction 
\[\adjunction{\sset}{\coalg_{\K}}{\can_{\K}}{V_{\K}}\]
restricts to an adjunction
\[\adjunction{\sset_1}{{\coalg_{\K}}_1,}{\can_{\K}}{V_{\K}}\]
which is an equivalence of categories and a Quillen equivalence, when both categories are equipped with the left-lifted model structure of Lemma \ref{lem:left_ind_to_1red}.
\end{thm}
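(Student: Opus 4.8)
The plan is to verify, in order, that the adjunction restricts to the $1$-reduced subcategories, that this restriction is an equivalence of categories, that it is a Quillen pair, and finally that it is a Quillen equivalence; the last step is where the real content lies.

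First I would check that $\can_{\K}$ and $V_{\K}$ preserve $1$-reducedness. Since $\tilZ$ is applied degreewise and $(\tilZ X)_{n}$ is the free abelian group on the non-basepoint $n$-simplices of $X$, a $1$-reduced $X$ (i.e. $X_{0}=X_{1}=\ast$) gives $(\tilZ X)_{0}=(\tilZ X)_{1}=0$, so $\can_{\K}X=(\tilZ X,\tilZ\eta_{X})$ lies in $\coalg_{\K,1}$. Conversely, given a $1$-reduced $\K$-coalgebra $(A,\delta)$, Theorem \ref{thm:equivalence_of_cats} (via Proposition \ref{prop:filteredColim}) furnishes $X\in\sset$ with $(A,\delta)\cong\can_{\K}X=(\tilZ X,\tilZ\eta_{X})$; then $(\tilZ X)_{1}=0$ forces $X_{1}=\ast$ and likewise $X_{0}=\ast$, so $X$ is $1$-reduced and $V_{\K}(A,\delta)\cong X\in\sset_{1}$. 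Because $\sset_{1}\subseteq\sset$ and $\coalg_{\K,1}\subseteq\coalg_{\K}$ are full subcategories preserved by $\can_{\K}$ and $V_{\K}$ respectively, the hom-set bijections of the original adjunction restrict, so $\can_{\K}\dashv V_{\K}$ restricts to an adjunction between $\sset_{1}$ and $\coalg_{\K,1}$. As a restriction of the equivalence of Theorem \ref{thm:equivalence_of_cats} to full subcategories preserved by both functors it remains fully faithful, and the essential surjectivity just established shows it is an equivalence of categories.

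Next I would confirm it is a Quillen pair. In all of these categories the cofibrations and weak equivalences of the left-lifted structures of Lemma \ref{lem:left_ind_to_1red} are created by the inclusions into $\sset$, respectively $\coalg_{\K}$; moreover $\can_{\K}$ commutes with these inclusions (the defining square) and is left Quillen on the ambient categories by Corollary \ref{cor:left_ind_mod_str}. Hence if $i$ is a cofibration (resp. acyclic cofibration) in $\sset_{1}$, its image in $\sset$ is one, so $\can_{\K}i$ is a cofibration (resp. acyclic cofibration) in $\coalg_{\K}$; being a map of $1$-reduced objects, it is then a cofibration (resp. acyclic cofibration) in $\coalg_{\K,1}$. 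Thus $\can_{\K}$ is left Quillen.

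The heart of the argument, and the step I expect to be the main obstacle, is upgrading this equivalence of categories and Quillen pair to a Quillen equivalence, which amounts to reconciling the two a priori different notions of weak equivalence. A weak equivalence in $\coalg_{\K,1}$ is, by the left-induction of Lemma \ref{lem:left_ind_to_1red} from the structure of Corollary \ref{cor:left_ind_mod_str}, a map whose underlying morphism of simplicial abelian groups is a $\pi_{*}$-isomorphism; since $\pi_{*}(\tilZ X)\cong\widetilde H_{*}(X;\Z)$, a map $f$ of $\sset_{1}$ has $\can_{\K}f$ a weak equivalence exactly when $f$ is an integral homology isomorphism. But every object of $\sset_{1}$ is simply connected, so by the homology Whitehead theorem a map in $\sset_{1}$ is a homology isomorphism if and only if it is a weak homotopy equivalence, i.e. a weak equivalence in the left-lifted structure on $\sset_{1}$. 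Therefore $\can_{\K}$ \emph{both preserves and reflects} weak equivalences, and since $\can_{\K}V_{\K}\cong\mathrm{Id}$ the same holds for $V_{\K}$. As every object is cofibrant, the derived counit at a fibrant $Y$ is the counit $\can_{\K}V_{\K}Y\to Y$, an isomorphism, while the derived unit at any $X$ is $X\xrightarrow{\cong}V_{\K}\can_{\K}X\to V_{\K}(\can_{\K}X)^{\mathrm{fib}}$, which is a weak equivalence because $V_{\K}$ preserves weak equivalences and the fibrant replacement map is one. Hence the restricted adjunction is a Quillen equivalence. Alternatively, one may reuse Theorem \ref{thm:derived_unit_counit_we} directly, observing that the explicit fibrant replacement built there from the Eilenberg--MacLane spaces $K(\pi_{n}X,n)$ with $n\ge 2$ consists of $1$-reduced objects, so the derived unit is a weak equivalence for every object of $\sset_{1}$, all of which are $1$-connected.
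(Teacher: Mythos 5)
Your proposal is correct, and its core step proceeds differently from the paper's. The paper establishes the Quillen equivalence by recycling the explicit Postnikov-tower fibrant replacement constructed in the proof of Theorem \ref{thm:derived_unit_counit_we}, simply observing that for $1$-reduced $X$ every stage of that construction (the $K(\pi_n X,m)$, path objects, pullbacks, and limits) lives in the $1$-reduced subcategories, so the derived unit computation carries over verbatim; this is exactly the ``alternative'' you mention in your last sentence. Your main route instead identifies the weak equivalences of $\coalg_{\K,1}$ (left-induced, hence created on underlying simplicial abelian groups) with reduced integral homology isomorphisms via $\pi_*(\tilZ X)\cong \widetilde H_*(X;\Z)$, and then invokes the homology Whitehead theorem for the simply connected objects of $\sset_1$ to conclude that $\can_{\K}$ preserves \emph{and reflects} weak equivalences; combined with the fact that the unit and counit are isomorphisms, the Quillen equivalence is then formal, with no explicit fibrant replacement needed. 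What each buys: the paper's argument is self-contained relative to its earlier constructions and yields a concrete fibrant replacement; yours is shorter, imports a standard external theorem, and gives the sharper statement that $\can_{\K}$ creates the weak equivalences on $1$-reduced objects, which makes the equivalence-of-categories-implies-Quillen-equivalence step transparent. You also spell out the preservation of $1$-reducedness by both adjoints (using Proposition \ref{prop:filteredColim} to see that $V_{\K}$ lands in $\sset_1$), which the paper leaves implicit in ``the restricted adjunction is clearly an equivalence of categories''; that is a worthwhile addition rather than a divergence.
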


\begin{proof} The restricted adjunction is clearly an equivalence of categories, since the unrestricted adjunction is. It is a Quillen pair, because the left adjoint preserves cofibrations and acyclic cofibrations. It remains to show that the derived unit and derived counit are weak equivalences, which follows from Theorem \ref{thm:derived_unit_counit_we} and its proof, where $X$ is $1$-reduced (so in particular 1-connected). Notice that all the constructions made in the proof of Theorem \ref{thm:derived_unit_counit_we} for $1$-reduced $X$ live in the categories of $1$-reduced simplicial sets,  $1$-reduced abelian groups, and $1$-reduced $K$-coalgebras, respectively.
\end{proof}

\subsubsection{Bousfield-Kan completion}

We now show that the derived unit of the adjunction \[\adjunction{\sset}{\coalg_{\K}}{\can_{\K}}{V_{\K}}\] is exactly the Bousfield-Kan $\Z$-completion map.
The proof requires a different fibrant replacement in $\coalg_{\K}$ from that of the proof of Theorem \ref{thm:derived_unit_counit_we}, which we construct as follows, using the restricted (or ``fat'') totalization functor $\Tot^{\res}$ \cite[Definition 1.9]{AroneChing} defined for a semicosimplicial object $Z^\bullet$ in a simplicial model category $\M$ by
\[\Tot^{\res}(Z^\bullet)=\Hom_{\Delta_{\inj}}(\Delta^\bullet,Z^\bullet).\]
Here $\Delta_{\inj}$ denotes the wide subcategory of the ordinal category $\Delta$ where the morphisms are the injections. As in the case of totalization of a cosimplicial object \cite[Section VII. 5]{GoerssJardine}, the restricted totalization of a semi-cosimplicial object $Z^\bullet$  is the limit of a tower 
$$\cdots \to \Tot^{\res}_{n+1}Z^\bullet \to  \Tot^{\res}_n Z^\bullet\to \Tot^{\res}_{n-1} Z\to \cdots \to \Tot^{\res}_0 Z^\bullet,$$ 
where $\Tot^{\res}_0 Z^\bullet \cong Z^0$ and for any $n>0$, there is a pullback diagram 
\[ \xymatrix{ \Tot^{\res}_n Z \ar[d] \ar[r] \ar@{}[dr]|(.2){\lrcorner} & Z_n^{\Delta^n} \ar[d]^{p_n} \\ \Tot^{\res}_{n-1} Z \ar[r] & Z_n^{\partial\Delta^n},}\]
where $p_{n}$ is induced by the inclusion of $\partial \Delta^{n}$ into $\Delta^{n}$.  In particular, there is a natural map $p:\Tot^{\res}(Z^\bullet) \to Z^{0}$. 
Moreover, if $Z^{\bullet}$ is objectwise fibrant in $\M$, then all of the maps $\Tot^{\res}_n Z^\bullet\to \Tot^{\res}_{n-1} Z$ are fibrations  in $\M$, whence $p:\Tot^{\res}(Z^\bullet) \to Z^{0}$ is a fibration as well.  Finally, if $\eta: Z^{-1}\to Z^{\bullet}$ is a coaugmentation, then there is a factorization in $\M$
$$\xymatrix{ Z^{-1} \ar[rr]^\eta \ar[dr]_{\widehat \eta} ^\sim & & Z^{0} \\ & \Tot^{\res} Z^{\bullet} \ar[ur]_{p}}$$
in which $\hat\eta$ is a weak equivalence by the usual ``extra codegeneracy'' argument.

We now apply the restricted totalization functor in the case $\M= \coalg_{\K}$, endowed with its simplicial model structure left-induced from the projective model structure on $\sAb$ by 
\[\adjunction{\coalg_{\K}}{\sAb}{U_{\K}}{F_{\K}}.\] 
To simplify notation, let $K=\tilZ \U$.  Recall that $\Delta:K \to KK$ and $\ve: K\to \Id$ denote the comultiplication and counit of the comonad $\K$.  

For any $(Y,\rho) \in \coalg_\K$, 
consider the following coaugmented semi-cosimplicial object in $\coalg_\K$ 
\[ \xymatrix{ (Y,\rho) \ar[r]^(0.3)\rho & C^{\bullet}(Y,\rho)=\Big(  F_{\K}Y \ar@<1ex>[r]^-{K\rho} \ar@<-1ex>[r]_(0.6){\Delta_{Y}} & F_{\K}(KY)\ar@<2.5ex>[r]^{K^{2}\rho} \ar [r]^{K\Delta_{Y}}\ar@<-1.5ex>[r]_{\Delta_{KY}} & \cdots\Big).}\] 
As seen in the general case above, there exists a factorization in $\coalg_\K$ 
\[ \xymatrix{ (Y,\rho) \ar[rr]^\rho \ar[dr]_{\widehat\rho}^{\sim} & & F_\K Y \\ & \Tot^{\res} C^{\bullet} (Y,\rho) \ar@{->>}[ur]_{p}}\]
where $p$ is a fibration because every $\K$-coalgebra in the image of $F_{\K}$ is fibrant in the left-induced model structure on $\coalg_{\K}$, which implies that $C^{\bullet} (Y,\rho)$ is objectwise fibrant.
It follows that $\widehat \rho: (Y,\rho) \to \Tot^{\res}C^{\bullet}(Y,\rho)$ is a fibrant replacement in $\coalg_\K$ for every $(Y,\rho)$.

\begin{prop}The derived unit of the adjunction \[\adjunction{\sset}{\coalg_{\K}}{\can_{\K}}{V_{\K}}\]  is the Bousfield-Kan $\Z$-completion map.
\end{prop}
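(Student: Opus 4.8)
The plan is to transport the fibrant replacement $\Tot^{\res}C^{\bullet}(\can_{\K}X)$ across the equivalence $V_{\K}$ and to recognize the result as the Bousfield--Kan resolution of $X$. Throughout, write $T=\U\tilZ$ for the reduced free abelian monad on $\sset$, so that the $\Z$-completion is $\Z_{\infty}X=\Tot(T^{\bullet+1}X)$ and the completion map is the canonical coaugmentation $X\to\Tot(T^{\bullet+1}X)$. Setting $(Y,\rho)=\can_{\K}X=(\tilZ X,\tilZ\eta_{X})$, a model for the derived unit at $X$ is the composite
$$X\xrightarrow{\ \cong\ }V_{\K}\can_{\K}X\xrightarrow{\ V_{\K}\widehat\rho\ }V_{\K}\Tot^{\res}C^{\bullet}(\can_{\K}X),$$
since $\widehat\rho$ is a fibrant replacement of $\can_{\K}X$ in $\coalg_{\K}$.

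First I would use that $\can_{\K}\dashv V_{\K}$ is an equivalence of categories and a simplicial adjunction, so that $V_{\K}$ preserves limits and cotensors and therefore commutes with the construction of $\Tot^{\res}$; hence $V_{\K}\Tot^{\res}C^{\bullet}(\can_{\K}X)\cong\Tot^{\res}\big(V_{\K}C^{\bullet}(\can_{\K}X)\big)$. Next I would identify $V_{\K}C^{\bullet}(\can_{\K}X)$ termwise. Because $\tilZ=U_{\K}\can_{\K}$, the composite of the corresponding right adjoints yields $\U\cong V_{\K}F_{\K}$, so the $n$-th term $F_{\K}(K^{n}\tilZ X)$ is carried to $\U(\tilZ\U)^{n}\tilZ X=T^{n+1}X$. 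Naturality of the comultiplication $\Delta$ and counit $\epsilon$ of $\K$ then identifies the coface maps of $V_{\K}C^{\bullet}(\can_{\K}X)$ with those of the standard monadic resolution $T^{\bullet+1}X$, and the coaugmentation with $X$. Thus $V_{\K}C^{\bullet}(\can_{\K}X)$ is exactly the semi-cosimplicial object underlying the Bousfield--Kan resolution of $X$.

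Finally I would compare restricted and ordinary totalization. The resolution $T^{\bullet+1}X$ is Reedy fibrant (each term is a Kan complex underlying a simplicial abelian group, and it is the standard fibrant cosimplicial resolution of Bousfield and Kan), so the restriction map $\Tot(T^{\bullet+1}X)\to\Tot^{\res}(T^{\bullet+1}X)$ along $\Delta_{\inj}\hookrightarrow\Delta$ is a weak equivalence, by the standard comparison of ordinary and restricted (fat) totalization for Reedy fibrant cosimplicial objects. This comparison is compatible with the coaugmentations from $X$, so in the homotopy category the composite displayed above is identified with $X\to\Tot(T^{\bullet+1}X)=\Z_{\infty}X$, which is precisely the completion map.

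The main obstacle is this last step: matching the restricted totalization used to build the fibrant replacement with the ordinary totalization that defines the Bousfield--Kan completion, and checking that the coaugmentations correspond, so that one recovers the completion map itself rather than merely a weakly equivalent object. A secondary technical point is the termwise and coface identification of $V_{\K}C^{\bullet}(\can_{\K}X)$ with $T^{\bullet+1}X$, which reduces to the compatibility of the comonad $\K$ on $\sAb$ with the monad $T$ on $\sset$ under the adjunction $\tilZ\dashv\U$.
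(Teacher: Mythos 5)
Your proposal follows essentially the same route as the paper: the same fibrant replacement $\widehat\rho:(Y,\rho)\to\Tot^{\res}C^{\bullet}(Y,\rho)$, the same commutation of $V_{\K}$ (as a simplicial right adjoint and equivalence) past $\Tot^{\res}$, the same termwise identification of $V_{\K}C^{\bullet}(\can_{\K}X)$ with the cobar resolution of the monad $\U\tilZ$ (via $V_{\K}F_{\K}\cong\U$, equivalently the paper's $\can_{\K}(\U\tilZ X)\cong F_{\K}(\tilZ X)$), and the same Reedy-fibrancy comparison of restricted with ordinary totalization before invoking Bousfield--Kan. The argument is correct and matches the paper's proof in all essentials.
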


\begin{proof} The construction of the fibrant replacement above implies that a model for the component of  the derived unit map of the adjunction $\can_\K\dashv V_\K$ at a pointed simplicial set $X$ is given by the composite
\[\xymatrix{X \ar[r]^(.3){\cong} & V_\K \can_\K (X) \ar[r] & V_\K \Tot^{\res} C^{\bullet}(\can_\K X).}
\]
Since $V_{\K}$ is a simplicial right adjoint, and $\can_\K\dashv V_{\K}$ is an equivalence of categories, 
$$V_\K \Tot^{\res} C^{\bullet}(\can_\K X) {\cong}  \Tot^{\res}V_\K \can_{\K} {\widetilde{C}^{\bullet}(X)} \cong  \Tot^{\res}\widetilde{C}^{\bullet}(X),$$
where $\widetilde{C}^{\bullet}(X)$ is the following coaugmented restricted-cosimplicial object in $\sset$:
\[ \xymatrix{ X  \ar[r]^(0.3){\eta_X} &\widetilde{C}^{\bullet}(X)=( \U\tilZ X \ar@<1ex>[r]^-{\eta_{\U \tilZ X}} \ar@<-1ex>[r]_-{\U\tilZ\eta_X} & \U\tilZ\U\tilZ X & \cdots).}\] 
This identification relies on the isomorphism $\can_{\K }(\U\tilZ X)\cong F_{\K}(\tilZ X)$ of $\K$-coalgebras for every pointed simplicial set $X$. 

Notice that $\widetilde{C}^{\bullet}(X)$ is the restriction of a cosimplicial object, the cobar construction associated to the monad on $\sset$ with underyling functor $\U\tilZ$:
\[\xymatrix{ X  \ar[r]^(0.3){\eta_X} & \overline{C}^{\bullet}(X)=(\U\tilZ X \ar@<1ex>[r]^-{\eta_{\U \tilZ X}} \ar@<-1ex>[r]_-{\U\tilZ\eta_X} & \U\tilZ\U\tilZ X \ar[l] & \cdots).}\] 
Since $\overline{C}^{\bullet}(X)$ is Reedy fibrant in $\sset^\Delta$ \cite[Example X. 4.10(ii)]{BousfieldKan}, the natural map $ \Tot^{\res}\widetilde{C}(X) \to \Tot \overline{C}(X) $ is a weak equivalence \cite{AroneChing}.

By \cite[Section I.4.2]{BousfieldKan}, the map $X\to \Tot \overline{C}(X)$ induced by $\eta_{X}$ is a model for the $\Z$-completion of $X$ and therefore $X\to V_\K \Tot^{\res} C^{\bullet}(\can_\K X)$ is as well.\end{proof}

As in the previous section, here we used repeatedly the fact that $\can_\K$ is an equivalence of categories. In particular, if $\can_{\K}$ were not an equivalence of categories, then we would know little about how to compute limits in $\coalg_\K$.

\subsection{Homotopy and the comonads arising from suspension}

In this section we concentrate on analyzing the two adjunctions
\[\adjunction{\sset}{\sset}{\Sigma^r}{\Omega^r}\]
and 
\[\adjunction{\sset}{\spe}{\Sigma^\infty}{\Omega^\infty}\]
and the corresponding comonads, $\K_{r}$ and $\K_{\infty}$, and their associated categories and adjunctions.

\[
\xymatrix@C=3pc {
\sset \ar@<+1ex>[rr]^{\Sigma^r}\ar@<-2.5ex>[dr]_{\can_{\K_{r}}} && \sset \ar@<+1.1ex>[ll]^{\Omega^r} \ar@<+0.3ex>[dl]_{F_{\K_r}}\\
&\qquad\coalg_{\K_{r}}\qquad \ar@<-2ex>[ur]_{U_{\K_{r}}} \ar@<+0.8ex>[ul]_{V_{\K_{r}}}&
}
\]

\[
\xymatrix@C=3pc {
\sset \ar@<+1ex>[rr]^{\Sigma^\infty}\ar@<-2.5ex>[dr]_{\can_{\K_{\infty}}} && \spe \ar@<+1.1ex>[ll]^{\Omega^\infty} \ar@<+0.3ex>[dl]_{F_{\K_\infty}}\\
&\qquad\coalg_{\K_{\infty}}\qquad \ar@<-2ex>[ur]_{U_{\K_{\infty}}} \ar@<+0.8ex>[ul]_{V_{\K_{\infty}}}&
}
\]

As in the previous section, one could use the equivalence of categories $V_{\K_{r}}$ or $V_{\K_{\infty}}$ to right-induce model structures on the categories $\coalg_{\K_{r}}$ and $\coalg_{\K_{\infty}}$ from the Kan-Quillen model structure on $\sset$, with respect to which the comparison adjunctions are trivially Quillen equivalences. 
Mimicking the more interesting result for $\sAb$, we proceed instead with left-inducing  model structures using Theorem \ref{thm:leftLiftGeneral}, of which the following existence results are immediate consequences.

\begin{cor}\label{thm:mod_str_r} There is a model structure on a category $\coalg_{\K_{r}}$ left induced by the forgetful functor $U_{\K_{r}}$ from the Kan-Quillen model structure on $\sset$ \[\adjunction{\coalg_{\K_{r}}}{\sset}{U_{\K_{r}}}{F_{\K_r}}\] such that the induced adjunction \[\adjunction{\sset}{\coalg_{\K_{r}}}{\can_{\K_r}}{V_{\K_{r}}}\] is a Quillen pair. \end{cor}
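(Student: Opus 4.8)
The plan is to verify that the adjunction $\adjunction{\sset}{\sset}{\Sigma^r}{\Omega^r}$ satisfies the three hypotheses of Theorem \ref{thm:leftLiftGeneral}, from which the statement follows immediately. The three things to check are: that both the source and target (each a copy of $\sset$ with the Kan-Quillen model structure) are accessible $\sset$-model categories; that $\Sigma^r \dashv \Omega^r$ is a simplicial Quillen adjunction; and that the forgetful functor $U_{\K_r}$ carries every $\K_r$-coalgebra to a cofibrant object of $\sset$.

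First I would dispatch the accessibility hypothesis. The category $\sset$ of pointed simplicial sets with the Kan-Quillen model structure is cofibrantly generated and locally presentable, hence accessible, and it is a simplicial model category with respect to the usual tensoring and enrichment; thus both copies qualify as accessible $\sset$-model categories. Next I would verify that $\Sigma^r \dashv \Omega^r$ is a simplicial Quillen adjunction. Since $\Sigma^r = - \wedge S^r$ is given by smashing with the fixed cofibrant pointed simplicial set $S^r$, the pushout-product axiom for the Kan-Quillen model structure guarantees that $\Sigma^r$ preserves cofibrations and acyclic cofibrations, so it is left Quillen; the simplicial enrichment of the adjunction follows from the associativity and compatibility of the smash product with the tensoring by $\sset$.

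The remaining hypothesis holds trivially: every object of $\sset$ is cofibrant in the Kan-Quillen model structure, so $U_{\K_r}$ automatically sends each $\K_r$-coalgebra to a cofibrant simplicial set, and no separate cofibrancy argument is needed. I expect no genuine obstacle in this proof; the only step requiring any care is confirming the pushout-product and simplicial-Quillen conditions for $\Sigma^r$, but these are entirely routine for smashing with a cofibrant object. Applying Theorem \ref{thm:leftLiftGeneral} then yields both the left-induced model structure on $\coalg_{\K_r}$ created by $U_{\K_r}$ and the conclusion that the comparison adjunction $\adjunction{\sset}{\coalg_{\K_{r}}}{\can_{\K_r}}{V_{\K_{r}}}$ is a Quillen pair.
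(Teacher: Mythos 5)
Your proposal is correct and follows exactly the paper's route: the paper derives this corollary as an immediate consequence of Theorem \ref{thm:leftLiftGeneral}, and the hypotheses you verify (accessibility of the Kan--Quillen model structure on $\sset$, the simplicial Quillen property of $\Sigma^r \dashv \Omega^r$ via the pushout-product axiom, and the trivial cofibrancy of underlying objects since every pointed simplicial set is cofibrant) are precisely the ones needed. The paper leaves this verification implicit, so your write-up simply makes explicit what the authors treat as routine.
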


\begin{cor}\label{thm:mod_str_infty}There is a model structure on a category $\coalg_{\K_{\infty}}$ left induced by the forgetful functor $U$ from the projective stable model structure on $\spe$
\[\adjunction{\coalg_{\K_{\infty}}}{\spe}{U}{F_{\K_\infty}}\]
such that the adjunction
\[\adjunction{\sset}{\coalg_{\K_{\infty}}}{\can_{\K_{\infty}}}{V_{\K_{\infty}}}\]
is a Quillen pair.
\end{cor}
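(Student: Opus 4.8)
The plan is to apply Theorem \ref{thm:leftLiftGeneral} directly to the simplicial Quillen adjunction $\Sigma^\infty \dashv \Omega^\infty$ between $\sset$ and $\spe$, exactly as Corollary \ref{thm:mod_str_r} is obtained from the adjunction $\Sigma^r \dashv \Omega^r$. First I would record that the ambient hypotheses hold: the category $\sset$ with its Kan--Quillen model structure and the category $\spe$ with its projective stable model structure are both cofibrantly generated and locally presentable, hence accessible $\sset$-model categories, and $\Sigma^\infty \dashv \Omega^\infty$ is a simplicial Quillen adjunction. These facts are standard for symmetric spectra \cite{HoveySSSymSpectra}, so this step is pure bookkeeping.

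The one hypothesis of Theorem \ref{thm:leftLiftGeneral} that requires an argument --- and the only place where the passage from the $\sset$-valued Corollary \ref{thm:mod_str_r} to the spectrum-valued statement costs anything --- is that the forgetful functor $U\colon \coalg_{\K_{\infty}}\to\spe$ carry every $\K_{\infty}$-coalgebra to a \emph{cofibrant} object of $\spe$. (In Corollary \ref{thm:mod_str_r} this is automatic because every object of $\sset$ is cofibrant, whereas cofibrant symmetric spectra form a proper subclass.) I would verify it as follows. Let $(Y,\delta)$ be a $\K_{\infty}$-coalgebra, so that $\delta\colon Y\to \Sigma^\infty\Omega^\infty Y$ and the counit condition gives $\epsilon_{Y}\circ\delta=\id_{Y}$. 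Thus $Y$ is a retract of $\Sigma^\infty\Omega^\infty Y$. Since every pointed simplicial set is cofibrant in the Kan--Quillen structure and $\Sigma^\infty$ is left Quillen, the spectrum $\Sigma^\infty\Omega^\infty Y$ is cofibrant; as retracts of cofibrant objects are cofibrant, so is $Y$. Equivalently, one may invoke the comonadicity of $\Sigma^\infty$ established above: every $\K_{\infty}$-coalgebra is isomorphic to $\can_{\K_{\infty}}(X)$ for some $X\in\sset$, whose underlying spectrum $\Sigma^\infty X$ is cofibrant for the same reason.

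With every hypothesis in place, Theorem \ref{thm:leftLiftGeneral} produces the model structure on $\coalg_{\K_{\infty}}$ left-induced by $U$ from the projective stable model structure on $\spe$, and its ``moreover'' clause yields at once that the comparison adjunction $\can_{\K_{\infty}}\dashv V_{\K_{\infty}}$ is a simplicial Quillen pair. I expect the sole genuine obstacle to be the cofibrancy check of the middle paragraph, since this is where the structure of $\spe$ (as opposed to the $\sset$ target of Corollary \ref{thm:mod_str_r}) actually intervenes; everything else is the verification of standing hypotheses on $\spe$ and a direct appeal to the general existence theorem.
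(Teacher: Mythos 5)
Your proposal is correct and follows exactly the paper's route: the corollary is stated as an immediate consequence of Theorem \ref{thm:leftLiftGeneral} applied to the simplicial Quillen adjunction $\Sigma^\infty\dashv\Omega^\infty$. Your retract argument for the cofibrancy of underlying spectra of $\K_\infty$-coalgebras (via the counit condition and the fact that $\Sigma^\infty$ is left Quillen) supplies a verification the paper leaves implicit, and it is valid.
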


Given these model structures, we can study the derived unit and counit of the comparison adjunction.

\begin{thm}\label{thm:derived_unit_counit_we_Sigma_r} Consider the adjunction 
\[\adjunction{\sset}{\coalg_{\K_{r}}}{\can_{\K_r}}{V_{\K_{r}}}\]
where the model structure on $\coalg_{\K_{r}}$ is the left-induced model structure of Theorem \ref{thm:mod_str_r}, and the model structure on $\sset$ is the usual Kan-Quillen model structure.
The component of the derived counit at any fibrant $\K_{r}$-coalgebra is a weak equivalence, as is the component of the derived unit at any 1-connected simplicial set. 
\end{thm}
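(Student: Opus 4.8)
The plan is to follow the two-part strategy of the proof of Theorem \ref{thm:derived_unit_counit_we} almost verbatim, the only genuinely new input being a delooping observation used to establish fibrancy.

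For the derived counit I would argue exactly as in the $\sAb$ case. By Proposition \ref{thm:comonadic_r} the adjunction $\can_{\K_{r}}\dashv V_{\K_{r}}$ is an equivalence of categories, so its counit is an isomorphism. Given a fibrant $\K_{r}$-coalgebra $Y$, the object $V_{\K_{r}}(Y)$ already computes $\mathbb{R}V_{\K_{r}}(Y)$, since $V_{\K_{r}}$ is right Quillen and $Y$ is fibrant, and since every pointed simplicial set is cofibrant, applying $\can_{\K_{r}}$ computes $\mathbb{L}\can_{\K_{r}}$. Hence the component of the derived counit at $Y$ agrees with the ordinary counit and is an isomorphism, in particular a weak equivalence.

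For the derived unit at a $1$-connected $X$ I would reproduce the Postnikov-tower construction of Theorem \ref{thm:derived_unit_counit_we}. Choose a Postnikov tower $X\to\lim_{n}X\langle n\rangle$ with $X\langle 2\rangle=K(\pi_{2}X,2)$ and each map $X\langle n\rangle\to X\langle n-1\rangle$ obtained as the pullback of the canonical fibration $PK(\pi_{n}X,n+1)\to K(\pi_{n}X,n+1)$; the coaugmentation $X\to\lim_{n}X\langle n\rangle$ is a weak equivalence. Applying $\can_{\K_{r}}$ and using that it is an equivalence of categories (hence preserves limits) and that it is left Quillen between model categories in which every object is cofibrant (hence preserves all weak equivalences), I obtain a weak equivalence $\can_{\K_{r}}X\to\lim_{n}\can_{\K_{r}}X\langle n\rangle\cong\can_{\K_{r}}(\lim_{n}X\langle n\rangle)$. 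It then remains only to prove that the target is fibrant, for then it is a fibrant replacement of $\can_{\K_{r}}X$, and applying $V_{\K_{r}}$ recovers the weak equivalence $X\cong V_{\K_{r}}\can_{\K_{r}}X\to\lim_{n}X\langle n\rangle$ as a model for the derived unit.

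The heart of the argument, and the step I expect to be the main obstacle, is this fibrancy claim. Unlike in the $\sAb$ case, an Eilenberg--MacLane space is not a priori the underlying object of something in the image of the right adjoint, so $\can_{\K_{r}}K(A,m)$ is not visibly cofree. The key observation I would exploit is that Eilenberg--MacLane spaces are $r$-fold loop spaces: for suitable simplicial models one has $K(A,m)\cong\Omega^{r}K(A,m+r)$ on the nose, compatibly with path fibrations, and for any $Z$ there is a natural isomorphism $\can_{\K_{r}}(\Omega^{r}Z)\cong F_{\K_{r}}(Z)$ of $\K_{r}$-coalgebras, since the comonad comultiplication is $\Sigma^{r}\eta\Omega^{r}$. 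Consequently $\can_{\K_{r}}X\langle 2\rangle\cong F_{\K_{r}}K(\pi_{2}X,2+r)$ is cofree, hence fibrant, while each tower map is, after applying the limit-preserving functor $\can_{\K_{r}}$, a pullback of $F_{\K_{r}}$ applied to a fibration of pointed simplicial sets and therefore a fibration in $\coalg_{\K_{r}}$ by Corollary \ref{thm:mod_str_r} and the stability of fibrations under pullback. The limit of this tower of fibrations between fibrant objects is then fibrant, and the proof concludes as above. The delicate point is precisely arranging the on-the-nose delooping of the Postnikov layers and their path fibrations so that $\can_{\K_{r}}$ carries them into the image of the cofree functor $F_{\K_{r}}$; granting this, everything else is formal and parallels Theorem \ref{thm:derived_unit_counit_we}.
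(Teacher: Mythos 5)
Your proposal is correct and follows essentially the same route as the paper: the counit is handled via the categorical equivalence, and the derived unit via the Postnikov tower, with fibrancy secured by realizing each $K(A,m)$, its path space, and the path fibration as $\Omega^{r}$ applied to objects and fibrations one dimension ($+r$) higher, so that $\can_{\K_{r}}$ carries them into the image of $F_{\K_{r}}$. Your explicit identification $\can_{\K_{r}}(\Omega^{r}Z)\cong F_{\K_{r}}(Z)$ is exactly the mechanism the paper's proof relies on, stated a bit more explicitly than in the paper itself.
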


\begin{proof} The proof follows exactly the same lines as the proof of Theorem \ref{thm:derived_unit_counit_we}. The key is that one can build a Postnikov tower for $1$-connected simplicial sets as before, and any $K(\pi_n(X),m)$ space can be constructed as $\Omega^r(K(\pi_n(X),m+r))$, which is fibrant. Moreover the model for the path space $PK(\pi_n(X),m)$ needed for constructing the next level of the Postnikov tower can be also taken as $\Omega^r(PK(\pi_n(X),m+r))$, since $\Omega^r$ preserves cotensoring with simplicial sets and  is a right Quillen functor  and therefore preserves fibrations and weak equivalences between fibrant objects.  Similarly, the fibration used in the pullback diagram for the next level of Postnikov tower is in the image of $\Omega^r$:
$$\Omega^r(PK(\pi_n(X),m+r)) \lra \Omega^r(K(\pi_n(X),m+r)).$$

It follows that for any $1$-connected simplicial set $X$,  $\can_{\K_{r}} (\lim_n X\langle n\rangle)$ is a fibrant replacement in $\coalg_{\K_r}$ for $\can_{\K_{r}}(X)$ by the same argument as in the proof of Theorem \ref{thm:derived_unit_counit_we}.  Thus the component of the derived unit at any 1-connected simplicial set $X$ is indeed a weak equivalence.
\end{proof}

\begin{thm}\label{thm:derived_unit_counit_we_Sigma_infty} Consider the adjunction 
\[\adjunction{\sset}{\coalg_{\K_{\infty}}}{\can_{\K_{\infty}}}{V_{\K_{\infty}}}\]
where the model structure on $\coalg_{\K_{\infty}}$ is the left--induced model structure of Theorem \ref{thm:mod_str_infty}, and the model structure on $\sset$ is the usual Kan-Quillen model structure.
The component of the derived counit at any fibrant $\K_{\infty}$-coalgebra is a weak equivalence, as is the component of the derived unit at any 1-connected simplicial set. 
\end{thm}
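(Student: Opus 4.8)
The plan is to adapt the proof of Theorem~\ref{thm:derived_unit_counit_we_Sigma_r} essentially verbatim, with $\Omega^\infty$ playing the role of $\Omega^r$ throughout. For the derived counit, I would first note that its component at a fibrant $\K_{\infty}$-coalgebra $Y$ coincides with the counit of the adjunction $\can_{\K_{\infty}}\dashv V_{\K_{\infty}}$, which is an isomorphism because that adjunction is an equivalence of categories; in particular it is a weak equivalence.

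For the derived unit, fix a $1$-connected simplicial set $X$ and build, exactly as in the proof of Theorem~\ref{thm:derived_unit_counit_we}, a Postnikov tower $\cdots\to X\langle 3\rangle\to X\langle 2\rangle$ together with a weak equivalence $X\xrightarrow{\sim}\lim_n X\langle n\rangle$. The heart of the matter is to realise every stage and every structure map of this tower as the image under the right adjoint $\Omega^\infty$ of a fibrant symmetric spectrum. Concretely, each Eilenberg--MacLane space $K(\pi_n(X),m)$ is weakly equivalent to $\Omega^\infty$ of a fibrant ($\Omega$-spectrum) model of the shifted Eilenberg--MacLane spectrum $\Sigma^m H\pi_n(X)$, while the path space $PK(\pi_n(X),m)$ together with its fibration onto $K(\pi_n(X),m)$ arises by applying $\Omega^\infty$ to a factorisation $0\xrightarrow{\sim}P\twoheadrightarrow \Sigma^m H\pi_n(X)$ in $\spe$. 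Since $\Omega^\infty$ is right Quillen for the projective stable model structure, it preserves fibrant objects, fibrations between fibrant objects, and weak equivalences between fibrant objects, so these images carry the correct homotopy type.

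I would then invoke the formal identification $\can_{\K_{\infty}}(\Omega^\infty Z)\cong F_{\K_{\infty}}(Z)$ of $\K_{\infty}$-coalgebras, valid for every $Z\in\spe$ for exactly the same reason as the isomorphism $\can_{\K}(\U\tilZ X)\cong F_{\K}(\tilZ X)$ exploited in the $\sAb$ case. Applying $\can_{\K_{\infty}}$ to the Postnikov tower thus produces a tower in $\coalg_{\K_{\infty}}$ whose bottom stage $\can_{\K_{\infty}}X\langle 2\rangle$ lies in the image of $F_{\K_{\infty}}$, hence is fibrant, and whose structure maps are pullbacks of images under $F_{\K_{\infty}}$ of fibrations of spectra, hence fibrations in the left-induced model structure of Theorem~\ref{thm:mod_str_infty}. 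Because $\can_{\K_{\infty}}$ is an equivalence of categories it preserves the limit, and the object $\can_{\K_{\infty}}(\lim_n X\langle n\rangle)\cong\lim_n\can_{\K_{\infty}}X\langle n\rangle$ is fibrant as a limit of a tower of fibrations over a fibrant object; because $\can_{\K_{\infty}}$ is left Quillen with every object of $\sset$ cofibrant, it preserves the weak equivalence $X\to\lim_n X\langle n\rangle$. Hence $\can_{\K_{\infty}}X\to\can_{\K_{\infty}}(\lim_n X\langle n\rangle)$ is a fibrant replacement of $\can_{\K_{\infty}}X$, and a model for the derived unit at $X$ is the composite $X\xrightarrow{\cong}V_{\K_{\infty}}\can_{\K_{\infty}}X\to V_{\K_{\infty}}\can_{\K_{\infty}}(\lim_n X\langle n\rangle)\cong\lim_n X\langle n\rangle$, which is the weak equivalence we began with.

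The main obstacle I anticipate is the fibrancy bookkeeping intrinsic to symmetric spectra: one needs genuine $\Omega$-spectrum models, and since $\Sigma^m$ does not preserve fibrancy, the spectra $\Sigma^m H\pi_n(X)$ and the path object $P$ must be fibrantly replaced before $\Omega^\infty$ is applied, together with a verification that these replacements do not disturb the homotopy types of the resulting spaces and fibrations. Once this is secured, the remainder of the argument is formally identical to the $\Omega^r$ case.
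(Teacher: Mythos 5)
Your proposal is correct and follows essentially the same route as the paper, which simply observes that the argument for $\Omega^r$ carries over with the Eilenberg--MacLane spaces, path objects, and fibrations of the Postnikov tower realised as images under $\Omega^\infty$ of (fibrant) Eilenberg--MacLane spectra. Your additional care about choosing $\Omega$-spectrum models before applying $\Omega^\infty$ is a sensible precision of the same argument, not a different approach.
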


\begin{proof} A proof analogous to the one of Theorem \ref{thm:derived_unit_counit_we_Sigma_r} works. The only difference now is that the $K(\pi_n(X),m)$  are given by $\Omega^\infty$ applied to Eilenberg-MacLane spectra. The same is true for the path objects and fibrations used to build the Postnikov tower for a $1$-connected simplicial set $X$.
\end{proof}

\appendix
\section{Comonads and their coalgebras}\label{coalgebras}

A \emph{comonad} on a category $\sM$ consists of an endofunctor $K:\sM\to \sM$, together with natural transformations $\Delta:K\to K\circ K$ and $\epsilon:K\to \id_{\sM}$ such that $\Delta$ is appropriately coassociative and counital., i.e., $\K=(K,\Delta, \epsilon)$ is a comonoid in the category of endofunctors of $\sM$.

  If $\adjunction{\sN}{\sM}{L}{R}$ is a pair of adjoint functors, with unit $\eta:\id_{\sM}\to RL$ and counit $\epsilon:LR\to \id_{\sM}$, then $(LR, L\eta_{R}, \epsilon)$ is a comonad on $\sM$.

\begin{defn}\label{defn:Kcoalg}  Let $\K=(K, \Delta, \ve)$ be a comonad on $\sM$.  The objects of the \emph{Eilenberg-Moore category of $\K$-coalgebras}, denoted $\coalg_{\K}$,  are pairs $(Y, \delta)$, where $Y$ is an object in $\sM$, and $\delta\in \sM(Y, KY)$ and satisfies
$$K\delta \circ \delta = \Delta_{Y}\circ \delta\quad\text{and}\quad \epsilon_{Y}\circ \delta =\id_{Y}.$$ 
A morphism in $\coalg_{\K}$ from $(Y,\delta)$ to $(Y',\delta')$ is a morphism $f:Y\to Y'$ in $\sM$ such that $Kf\circ \delta=\delta'\circ f$.
\end{defn}

The category $\coalg_{\K}$ of $\K$-coalgebras is related to the underlying category $\sM$ as follows.

\begin{rem} \label{rmk:K-adjunct} Let $\K=(K, \Delta, \epsilon)$ be a comonad on $\sM$. The forgetful functor $U_{\K}:\coalg_{\K}\to \sM$ admits a right adjoint
$$F_{\K}:\sM\to \coalg_{\K},$$
called the \emph{cofree $\K$-coalgebra functor}, which is defined on objects by
$$F_{\K}(Y) = (KY, \Delta_{Y})$$
and on morphisms by
$$F_{\K}(f)=Kf.$$
Note that $\K$ itself is the comonad associated to the $(U_{\K},F_{\K})$-adjunction.
\end{rem}

If the comonad $\K$ arises from an adjunction $\adjunction{\sN}{\sM}{L}{R}$, then a comparison functor, defined below, mediates between $\sN$ and $\coalg_{\K}$.

\begin{defn} Let $\adjunction{\sN}{\sM}{L}{R}$ be a pair of adjoint functors, with unit $\eta:\id_{\sN}\to RL$ and counit $\epsilon: LR\to \id_{\sM}$.  Let $\K$ denote the associated comonad.  The \emph{canonical $\K$-coalgebra functor}
$$\can _{\K}:\sN\to \coalg_{\K}$$
is defined on objects by
$$\can _{\K}(X)=(LX,L\eta_{X})$$
and on morphisms by
$$\can _{\K}(f)=Lf.$$
If $\sN$ admits equalizers, then $\can _{\K}$ has a right adjoint, the ``primitives'' functor
$$V_{\K}:\coalg_{\K}\to \sM,$$
which is defined on objects by 
$$V_{\K}(Y, \delta)=\operatorname{equal} (RY \egal {R\delta}{\eta_{RY}} RLRY).$$

The functor $L$ is \emph{comonadic} if $\can _{\K}$ is an equivalence of categories.
\end{defn}

\section{Model category techniques}\label{appendix}
In this appendix we recall techniques from \cite{HKRS} for establishing the existence of induced model category structures.

\begin{notation}\label{notation:cof} For any class of maps $X$ in a category $\sM$, we let $\LLP(X)$ (respectively, $\RLP(X)$) denote the class of maps having the left lifting property (respectively, the right lifting property) with respect to all maps in $X$. We use notation $X$-cof for the class of maps $\LLP(\RLP(X))$.\end{notation}

\begin{defn} A \emph{weak factorization system} on a category $\sfC$ consists of a pair $(\mathcal L,\mathcal R)$ of classes of maps so that the following conditions hold.
\begin{itemize}
\item Any morphism in $\sfC$ can be factored as a morphism in $\mathcal L$ followed by a morphism in $\mathcal R$.
\item $\mathcal L = \LLP(\mathcal R)$ and $\mathcal R = \RLP(\mathcal L)$.
\end{itemize}
\end{defn}

In particular, if $(\sM,\Fib,\Cof,\WE)$ is a model category, then $(\Cof \cap \WE, \Fib)$ and $(\Cof, \Fib \cap \WE)$ are weak factorization systems.  If one additional condition is satisfied, the converse holds as well.

\begin{prop}[{Joyal and Tierney} {\cite[7.8]{joyal-tierney}}]\label{prop:model-via-wfs} If $\sM$ is a bicomplete category, and $\Fib, \Cof, \WE$ are classes of morphisms so that 
\begin{itemize}
\item $\WE$ satisfies the 2-of-3 property, and 
\item $(\Cof \cap \WE, \Fib)$ and $(\Cof, \Fib \cap \WE)$ are weak factorization systems,
\end{itemize}
then $(\sM,\Fib,\Cof,\WE)$ is a model category.
\end{prop}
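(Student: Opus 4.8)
The plan is to verify the classical model-category axioms one at a time, for the definition in which a model structure on the bicomplete category $\sM$ consists of three classes $\Cof,\Fib,\WE$, each closed under retracts, with $\WE$ satisfying 2-of-3, such that $\Cof\cap\WE$ lifts against $\Fib$ and $\Cof$ lifts against $\Fib\cap\WE$, and every map admits both factorizations. Bicompleteness and 2-of-3 for $\WE$ are hypotheses, so they cost nothing. The factorization axioms are precisely the factorization clauses of the two given weak factorization systems $(\Cof\cap\WE,\Fib)$ and $(\Cof,\Fib\cap\WE)$. The two lifting axioms are equally immediate: the equalities $\Cof\cap\WE=\LLP(\Fib)$ and $\Cof=\LLP(\Fib\cap\WE)$ supplied by these systems say exactly that $\Cof\cap\WE$ has the left lifting property against $\Fib$ and that $\Cof$ has the left lifting property against $\Fib\cap\WE$.

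It then remains to show that $\Cof$, $\Fib$, and $\WE$ are closed under retracts. For $\Cof$ and $\Fib$ this is automatic: each weak factorization system presents both of its classes by a lifting property, so that $\Cof=\LLP(\Fib\cap\WE)$ and $\Fib=\RLP(\Cof\cap\WE)$, and any class cut out by a lifting property is retract-closed. The same observation shows that $\Cof\cap\WE=\LLP(\Fib)$ and $\Fib\cap\WE=\RLP(\Cof)$ are retract-closed, a fact I would reuse below.

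The crux is the retract-closure of $\WE$ itself. First I would record the characterization that a map lies in $\WE$ if and only if it factors as a map in $\Cof\cap\WE$ followed by a map in $\Fib\cap\WE$: one direction is 2-of-3 applied to such a factorization, while the other factors a given weak equivalence through $(\Cof,\Fib\cap\WE)$ and uses 2-of-3 to promote the cofibration factor into $\Cof\cap\WE$. Now suppose $f$ is a retract of $w\in\WE$. I would factor $f$ as $p\circ i$ with $i\in\Cof$ and $p\in\Fib\cap\WE$, so that by 2-of-3 it suffices to prove $i\in\WE$, i.e. $i\in\Cof\cap\WE=\LLP(\Fib)$. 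To this end I would factor $w$ as $p'\circ i'$ with $i'\in\Cof\cap\WE$ and $p'\in\Fib\cap\WE$, and then invoke the lifting axioms to produce comparison maps between the two factorizations: the cofibration $i$ lifts against the acyclic fibration $p'$, and the acyclic cofibration $i'$ lifts against the fibration $p$, yielding maps $\varphi$ and $\psi$ back and forth that are compatible with the retract data on both source and target.

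The main obstacle is exactly the passage from these comparison maps to the acyclicity of $i$: the composite $\psi\varphi$ restricts to the identity along $i$ and over $p$, yet need not be the identity on the nose, so one cannot naively exhibit $i$ as a retract of the acyclic cofibration $i'$. Arranging the lifts so that acyclicity genuinely transfers across the retract — leveraging the retract-closure of $\LLP(\Fib)$ together with 2-of-3, and ultimately the bicompleteness of $\sM$ — is the heart of the matter, and is the content of \cite[7.8]{joyal-tierney}. Once $i\in\Cof\cap\WE$ is secured, $f=p\circ i$ is a composite of weak equivalences, hence a weak equivalence by 2-of-3, which completes the verification of all the axioms.
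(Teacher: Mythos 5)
The paper does not prove this proposition---it is quoted verbatim from Joyal--Tierney with a citation---so the only meaningful comparison is with the classical argument that citation points to. Your reduction is correct as far as it goes: bicompleteness and 2-of-3 are hypotheses, the two factorization axioms and the two lifting axioms are literally the defining clauses of the given weak factorization systems, and $\Cof$, $\Fib$, $\Cof\cap\WE$, $\Fib\cap\WE$ are retract-closed because each equals an $\LLP(-)$ or $\RLP(-)$ class. But the one axiom that actually requires work, retract-closure of $\WE$, is exactly the step you do not carry out. You factor the retract $f$ of $w\in\WE$ as $f=p\circ i$ with $p\in\Fib\cap\WE$, reduce to showing $i\in\LLP(\Fib)$, propose comparison lifts $\varphi,\psi$ between the factorizations of $f$ and $w$, and then concede that $\psi\varphi$ need not be the identity and that repairing this ``is the content of [7.8].'' That concession is the entire theorem: deferring the only nontrivial step to the reference being proved is a genuine gap, and the strategy you sketch is not merely incomplete but a dead end---there is in general no way to rig the two lifts so that $i$ becomes an honest retract of $i'$.

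The standard argument closes the gap in two stages, and this is where bicompleteness is used beyond mere existence of (co)limits. Write the retract data as $s:A\to X$, $r:X\to A$, $s':B\to Y$, $r':Y\to B$ with $rs=\id$, $r's'=\id$, $ws=s'f$, $fr=r'w$. \emph{Stage one:} if $f$ is moreover a fibration, factor $w=q\circ j$ with $j:X\to Z$ in $\Cof\cap\WE$ and $q\in\Fib$ (so $q\in\Fib\cap\WE$ by 2-of-3); the square with top $r$, bottom $r'q$, left $j$, right $f$ commutes, and a lift $t:Z\to A$ exhibits $f$ as a retract of $q$ via $(js,t)$ on top and $(s',r')$ on the bottom, whence $f\in\Fib\cap\WE=\RLP(\Cof)$ by retract-closure of that class. \emph{Stage two:} for general $f$, factor $f=p\circ i$ with $i:A\to C$ in $\Cof\cap\WE$ and $p\in\Fib$ (note: the opposite factorization from yours), form the pushout $D$ of $i$ along $s$, observe that $X\to D$ is again in $\LLP(\Fib)=\Cof\cap\WE$, and use $g$ together with $s'p:C\to Y$ to induce $h:D\to Y$ with $w=h\circ(X\to D)$, so $h\in\WE$ by 2-of-3; the universal property of the pushout then makes $p$ a retract of $h$, stage one gives $p\in\WE$, and $f=p\circ i\in\WE$ by 2-of-3. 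It is this pushout construction (or its pullback dual, matching your choice of factorization) that your proposal is missing.
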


\begin{defn}
Let $(\sM,\Fib,\Cof,\WE)$ be a model category and consider a pair of adjunctions
\[ \xymatrix@C=4pc@R=4pc{ \sN \ar@<1ex>[r]^V \ar@{}[r]|\perp & \sM \ar@<1ex>[l]^R \ar@<1ex>[r]^L\ar@{}[r]|\perp & \sfC \ar@<1ex>[l]^U}\]
where the categories $\sfC$ and $\sN$ are bicomplete.
If they exist:
\begin{itemize}
\item the \emph{right-induced model structure} on $\sfC$ is given by \[\Big(\sfC,U^{-1}\Fib, \LLP\big({U^{-1}(\Fib \cap\WE)}\big), U^{-1}\WE\Big),\] and
\item the \emph{left-induced model structure} on $\sN$ is given by \[ \Big(\sN, \RLP{\big(V^{-1}(\Cof\cap\WE)\big)}, V^{-1}\Cof, V^{-1}\WE\Big).\]
\end{itemize}
\end{defn}

\begin{rem} The adjunction $(L,U)$ is a Quillen pair with respect to the right-induced model category structure on $\sfC$, when it exists.  Dually, the adjunction $(V,R)$ is a Quillen pair with respect to the left-induced model category structure on $\sN$, when it exists.
\end{rem}

Establishing an induced model category structure therefore reduces to proving the existence of appropriate weak factorization systems and checking a certain acyclicity condition.

\begin{prop}\label{prop:acyclicity-reduction}\cite[Proposition 2.1.4]{HKRS} Suppose $(\sM, \Fib, \Cof,\WE)$ is a model category, $\sfC$ and $\sN$ are bicomplete categories, and there exist adjunctions 
\[ \xymatrix@C=4pc@R=4pc{ \sN \ar@<1ex>[r]^V \ar@{}[r]|\perp & \sM \ar@<1ex>[l]^R \ar@<1ex>[r]^L\ar@{}[r]|\perp & \sfC \ar@<1ex>[l]^U}\] so that the right-induced weak factorization systems exists on $\sfC$, and the left-induced weak factorization systems exists on $\sN$.  It follows that
\begin{enumerate}
\item the right-induced model structure exists on $\sfC$ if and only if \[\LLP\,(U^{-1}\Fib) \subset U^{-1}\WE;\] and
\item the left-induced model structure exists on $\sN$ if and only if \[\RLP{(V^{-1}\Cof)} \subset V^{-1}\WE.\]
\end{enumerate}
\end{prop}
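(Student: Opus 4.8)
The plan is to deduce both statements from the Joyal--Tierney criterion, Proposition~\ref{prop:model-via-wfs}, by showing that in each case the stated acyclicity condition is exactly what is needed to promote the two given weak factorization systems to a model structure. I would prove (1) in detail; (2) is formally dual, obtained by interchanging $\LLP$ with $\RLP$, the roles of $L\dashv U$ and $V\dashv R$, and cofibrations with fibrations.

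Write $\Cof_{\sfC}=\LLP(U^{-1}(\Fib\cap\WE))$, $\Fib_{\sfC}=U^{-1}\Fib$, and $\WE_{\sfC}=U^{-1}\WE$ for the candidate classes of the right-induced structure on the bicomplete category $\sfC$. First I would record two purely formal observations. Since $\WE$ satisfies the 2-of-3 property and $U$ is a functor, its preimage $\WE_{\sfC}=U^{-1}\WE$ satisfies 2-of-3; and since preimages commute with intersections, $\Fib_{\sfC}\cap\WE_{\sfC}=U^{-1}(\Fib\cap\WE)$. The second observation shows that the first given right-induced weak factorization system, namely $\big(\Cof_{\sfC},\,U^{-1}(\Fib\cap\WE)\big)$, already has the form $\big(\Cof_{\sfC},\,\Fib_{\sfC}\cap\WE_{\sfC}\big)$ required by Proposition~\ref{prop:model-via-wfs}. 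The second given right-induced weak factorization system is $\big(\LLP(U^{-1}\Fib),\,U^{-1}\Fib\big)$, whose right class is exactly $\Fib_{\sfC}$; since the right class of a weak factorization system determines its left class as the corresponding $\LLP$, the remaining hypothesis of Proposition~\ref{prop:model-via-wfs}, that $\big(\Cof_{\sfC}\cap\WE_{\sfC},\,\Fib_{\sfC}\big)$ be a weak factorization system, holds if and only if $\Cof_{\sfC}\cap\WE_{\sfC}=\LLP(U^{-1}\Fib)$. Thus the right-induced model structure exists if and only if this set equality holds.

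It therefore suffices to show that
\[ \Cof_{\sfC}\cap\WE_{\sfC}=\LLP(U^{-1}\Fib)\quad\Longleftrightarrow\quad \LLP(U^{-1}\Fib)\subseteq U^{-1}\WE. \]
The forward implication is immediate: if the set equality holds then $\LLP(U^{-1}\Fib)=\Cof_{\sfC}\cap\WE_{\sfC}\subseteq\WE_{\sfC}=U^{-1}\WE$. For the converse, assume the acyclicity condition. The inclusion $\LLP(U^{-1}\Fib)\subseteq\Cof_{\sfC}\cap\WE_{\sfC}$ is formal: from $U^{-1}(\Fib\cap\WE)\subseteq U^{-1}\Fib$ one gets $\LLP(U^{-1}\Fib)\subseteq\LLP(U^{-1}(\Fib\cap\WE))=\Cof_{\sfC}$, and the hypothesis gives $\LLP(U^{-1}\Fib)\subseteq U^{-1}\WE=\WE_{\sfC}$. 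The reverse inclusion is the heart of the argument and proceeds by a retract argument. Given $f\in\Cof_{\sfC}\cap\WE_{\sfC}$, factor it through the second given weak factorization system as $f=p\circ i$ with $i\in\LLP(U^{-1}\Fib)$ and $p\in U^{-1}\Fib$. The acyclicity condition forces $i\in U^{-1}\WE$, so 2-of-3 applied to $f$ and $i$, both in $\WE_{\sfC}$, yields $p\in\WE_{\sfC}$, whence $p\in U^{-1}(\Fib\cap\WE)$. Since $f\in\Cof_{\sfC}=\LLP(U^{-1}(\Fib\cap\WE))$, the lifting square with $f$ on the left, $p$ on the right, $i$ along the top and the identity along the bottom admits a diagonal filler, which exhibits $f$ as a retract of $i$; as $\LLP(U^{-1}\Fib)$ is closed under retracts, $f\in\LLP(U^{-1}\Fib)$.

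I expect this retract argument to be the only genuine obstacle, since it is the single point where all the ingredients are used at once: the acyclicity hypothesis, the 2-of-3 property, the existence of the factorization, and the closure of $\LLP$-classes under retracts. Everything else is bookkeeping with preimages and lifting classes. Statement (2) follows by running the mirror image: with $\Cof_{\sN}=V^{-1}\Cof$, $\WE_{\sN}=V^{-1}\WE$, and $\Fib_{\sN}=\RLP(V^{-1}(\Cof\cap\WE))$, the weak factorization system $\big(V^{-1}(\Cof\cap\WE),\,\Fib_{\sN}\big)$ is automatically of the required form, and one reduces to the equality $\Fib_{\sN}\cap\WE_{\sN}=\RLP(V^{-1}\Cof)$, which is equivalent to $\RLP(V^{-1}\Cof)\subseteq V^{-1}\WE$ by the dual retract argument, now factoring through the given weak factorization system $\big(V^{-1}\Cof,\,\RLP(V^{-1}\Cof)\big)$.
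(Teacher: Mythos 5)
Your proof is correct and is essentially the standard argument for this statement (the paper itself only cites [HKRS, Proposition 2.1.4] without reproducing the proof): reduce via Joyal--Tierney to the set equality $\Cof_{\sfC}\cap\WE_{\sfC}=\LLP(U^{-1}\Fib)$ and establish the nontrivial inclusion by the retract argument. No gaps.
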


Under reasonable conditions on the categories $\sM$, $\sfC$, and $\sN$, the desired right- and left-induced weak factorization systems are guaranteed to exist, so that only the acyclicity conditions remains to be checked.

\begin{rem}\label{rem:corrected}
The following corollary first appeared in \cite{HKRS}, however the claims that it was based on, namely Theorems 3.3.1 and 3.3.2 of \cite{HKRS} are not correct. The reason is very subtle, and has to do with what exactly is lifted in applying \cite[Proposition 13]{BourkeGarner}. This was recently fixed by \cite[Theorem 2.6]{GKR}, thus the corollary follows making all the further results of \cite{HKRS} recalled here true.
\end{rem}

\begin{cor}\label{cor:cofib-gen}\cite[Corollary 2.7]{GKR} Suppose $(\sM, \Fib, \Cof,\WE)$ is a locally presentable, cofibrantly generated model category, $\sfC$ and $\sN$ are locally presentable categories, and there exist adjunctions 
\[ \xymatrix@C=4pc@R=4pc{ \sN \ar@<1ex>[r]^V \ar@{}[r]|\perp & \sM \ar@<1ex>[l]^R \ar@<1ex>[r]^L\ar@{}[r]|\perp & \sfC. \ar@<1ex>[l]^U}\] 
It follows that
\begin{enumerate}
\item the right-induced model structure exists on $\sfC$ if and only if \[\LLP\,(U^{-1}\Fib) \subset U^{-1}\WE;\] and
\item the left-induced model structure exists on $\sN$ if and only if \[\RLP{(V^{-1}\Cof)} \subset V^{-1}\WE.\]
\end{enumerate}
\end{cor}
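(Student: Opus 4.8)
The plan is to deduce the Corollary directly from Proposition~\ref{prop:acyclicity-reduction}, whose only hypothesis beyond those stated here is the existence of the right-induced weak factorization systems on $\sfC$ and of the left-induced weak factorization systems on $\sN$. Once these are in hand, the two acyclicity biconditionals of Proposition~\ref{prop:acyclicity-reduction} are verbatim the two conclusions of the Corollary, so the whole task reduces to manufacturing the four induced weak factorization systems out of the stronger hypotheses of local presentability and cofibrant generation.

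For the right-induced systems on $\sfC$ I would argue that cofibrant generation is transported along the adjunction $L\dashv U$. Writing $I$ and $J$ for the generating cofibrations and generating acyclic cofibrations of $\sM$, so that $\Fib\cap\WE=\RLP(I)$ and $\Fib=\RLP(J)$, the adjunction transposition yields $U^{-1}\RLP(S)=\RLP(LS)$ for any set $S$ of maps in $\sM$; indeed a map $g$ of $\sfC$ satisfies $Ug\in\RLP(S)$ if and only if $g\in\RLP(LS)$. Hence $U^{-1}(\Fib\cap\WE)=\RLP(LI)$ and $U^{-1}\Fib=\RLP(LJ)$, so both right-induced right classes are the right classes of weak factorization systems cofibrantly generated by the sets $LI$ and $LJ$. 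Since $\sfC$ is locally presentable it permits the small object argument, and these two weak factorization systems therefore exist.

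The genuinely difficult half is the existence of the left-induced systems on $\sN$, and this is precisely where the original argument of \cite{HKRS} failed (see Remark~\ref{rem:corrected}). The clean transposition available above is of no help here: one wants $V^{-1}\Cof=V^{-1}\LLP(\Fib\cap\WE)$ to be the left class of a weak factorization system, but pulling a left-lifting class back along the \emph{left} adjoint $V$ cannot be rewritten as the lifting class of any transported set of generators. The correct tool is the theory of accessible algebraic weak factorization systems: a cofibrantly generated weak factorization system is in particular accessible, and accessibility of a weak factorization system on a locally presentable category is preserved under left-induction along an adjunction between locally presentable categories. This is the content of \cite[Theorem~2.6]{GKR}, which repairs exactly what was lifted when \cite{HKRS} invoked \cite[Proposition~13]{BourkeGarner}; applying it to the accessible weak factorization systems $(\Cof,\Fib\cap\WE)$ and $(\Cof\cap\WE,\Fib)$ on $\sM$ produces the two left-induced weak factorization systems on $\sN$.

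With all four induced weak factorization systems in place, the hypotheses of Proposition~\ref{prop:acyclicity-reduction} are met and its two conclusions give the Corollary. The main obstacle is concentrated entirely in the left-induced case above: the whole subtlety of \cite{GKR} lies in verifying that those weak factorization systems exist, since there the accessibility bookkeeping for algebraic weak factorization systems must replace the naive small-object argument.
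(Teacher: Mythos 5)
Your proposal is correct and follows essentially the same route as the paper, which gives no independent argument but simply cites \cite[Corollary 2.7]{GKR}: the reduction to Proposition~\ref{prop:acyclicity-reduction}, the transported small-object argument for the two right-induced weak factorization systems on $\sfC$, and the appeal to \cite[Theorem 2.6]{GKR} (accessible weak factorization systems left-induce along adjunctions of locally presentable categories) for the two left-induced ones on $\sN$ are precisely the ingredients indicated in Remark~\ref{rem:corrected}. You have also correctly located the subtle point (the failure of the naive generator-transposition for the left-induced case, which is what broke in \cite{HKRS}), so nothing is missing.
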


The following consequences of Corollary \ref{cor:cofib-gen} are frequently applied in this paper. The first one is a simplified version of \cite[Theorem 2.2.1]{HKRS}

\begin{thm}\label{thm:2.2.1} Consider an adjunction between locally presentable categories
\[ \xymatrix@C=4pc@R=4pc{ \sN \ar@<1ex>[r]^V \ar@{}[r]|\perp & \sM \ar@<1ex>[l]^R } \]
where $\sM$ is a cofibrantly generated model category. If for any $X$ in $\sN$, $V(X)$ is cofibrant in $\sM$, and a good cylinder object for $X$ exists in $\sN$, i.e., there is a factorization in $\sN$

\[\xymatrix@C=4pc@R=4pc{X\coprod X \lra \mathrm{Cyl}(X) \lra X}
\]
 of the fold map, where the first map is sent by $V$ to a cofibration in $\sM$ and the second map to a weak equivalence in $\sM$, then $\sN$ admits a model structure left-induced by the adjunction $V\dashv R$.
\end{thm}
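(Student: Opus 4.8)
The plan is to obtain the left-induced model structure by a single invocation of Corollary \ref{cor:cofib-gen}(2), reducing the entire statement to a verification of the acyclicity condition
\[\RLP\big(V^{-1}\Cof\big)\subseteq V^{-1}\WE.\]
Since $\sN$ and $\sM$ are locally presentable and $\sM$ is cofibrantly generated, I would apply the corollary with the right-hand adjunction taken to be the identity adjunction on $\sfC=\sM$, so that all of its hypotheses are automatically met and part (2) collapses to exactly the displayed condition. In this way everything concerning local presentability and the existence of the left-induced weak factorization systems is absorbed into the cited corollary, and the genuine content of the theorem becomes the acyclicity check.

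To verify acyclicity, I would fix a map $p\colon E\to B$ in $\RLP(V^{-1}\Cof)$ and show that $Vp$ is a weak equivalence in $\sM$. First I would produce a section of $p$: because $V$ is a left adjoint it preserves the initial object, and the hypothesis that $VB$ is cofibrant makes $\varnothing\to B$ a member of $V^{-1}\Cof$, so lifting $\id_B$ against $p$ yields $s\colon B\to E$ with $ps=\id_B$. Next I would produce a homotopy $\id_E\simeq sp$. The good cylinder inclusion $i_E\colon E\coprod E\to \mathrm{Cyl}(E)$ lies in $V^{-1}\Cof$ by hypothesis, so $p$ has the right lifting property against it; solving the lifting problem whose top map is $(\id_E,sp)\colon E\coprod E\to E$ and whose bottom map is the composite $\mathrm{Cyl}(E)\to E\xrightarrow{p}B$, which commutes because $p\circ\id_E=p=p\circ sp$, produces a left homotopy $H\colon\mathrm{Cyl}(E)\to E$ from $\id_E$ to $sp$.

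Applying $V$ then yields the conclusion. Since $V$ preserves coproducts and the fold map, and $VE$ is cofibrant, $V\mathrm{Cyl}(E)$ is a genuine cylinder object for $VE$ in $\sM$; hence $VH$ exhibits a left homotopy $\id_{VE}\simeq Vs\circ Vp$, while $Vp\circ Vs=\id_{VB}$ holds strictly. Passing to the homotopy category $\mathrm{Ho}(\sM)$, left-homotopic maps out of a cofibrant object are identified, so $Vp$ becomes an isomorphism there, and a map whose image in $\mathrm{Ho}(\sM)$ is invertible is a weak equivalence. This gives $Vp\in\WE$, establishing the acyclicity condition and hence, by Corollary \ref{cor:cofib-gen}(2), the existence of the left-induced model structure.

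I expect the only subtle step to be the acyclicity verification, and within it the passage from the cylinder-level homotopy to a genuine weak equivalence. The two hypotheses are each used in an essential way: cofibrancy of $VB$ to extract the section, and cofibrancy of $VE$ together with the good cylinder to make $V\mathrm{Cyl}(E)$ a legitimate cylinder object. The final conclusion rests on the standard facts that the localization functor identifies left-homotopic maps with cofibrant source and that the weak equivalences of $\sM$ are precisely the maps inverted in $\mathrm{Ho}(\sM)$.
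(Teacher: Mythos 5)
Your argument is correct and is essentially the standard one: the paper does not reprove this statement but cites \cite[Theorem 2.2.1]{HKRS}, whose proof is exactly your reduction to the acyclicity condition $\RLP(V^{-1}\Cof)\subseteq V^{-1}\WE$ followed by the section-plus-cylinder-homotopy argument (a dual Quillen path-object argument). All the key steps check out, including the use of cofibrancy of $VB$ for the section, the good cylinder for the homotopy, and saturation of weak equivalences to pass back from $\ho(\sM)$.
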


\begin{rem}\label{rem:tensored_cylinder} If both categories $\sN$ and $\sM$ are simplicially enriched and tensored over $\sset$, so that the category $\sM$ is a simplicial model category, and the functor $V$ preserves tensoring with $\sset$, then the factorization in the theorem above is obtained as in \cite[Theorem 2.2.3]{HKRS}, i.e., by tensoring the object $X$ with the cylinder factorisation for $S^0$ in $\sset$.
\end{rem}

\begin{thm}\label{thm:square}\cite[Theorem 2.3.2]{HKRS} Given a square of adjunctions 
\[ \xymatrix@R=4pc@C=4pc{ \sN \ar@{}[r]|-{\perp} \ar@<-1ex>[d]_-L \ar@<-1ex>[r]_-{R} & \sM  \ar@<1ex>[d]^-L \ar@<-1ex>[l]_-V \\  
\ar@{}[r]|-{\top} \ar@{}[u]|-{\dashv} \sfC \ar@<1ex>[r]^-{R} \ar@<-1ex>[u]_-U & \sP \ar@<1ex>[u]^-U \ar@<1ex>[l]^-V\ar@{}[u]|-{\vdash} }\] 
between locally presentable categories, suppose that $(\sN,\Cof_\sN,\Fib_\sN,\WE_\sN)$ is a model category such that the left-induced model structure  created by $V$, denoted $(\sM,\Cof_\sM,\Fib_\sM,\WE_\sM)$, and the right-induced model structure created by $U$, denoted $(\sfC,\Cof_\sfC,\Fib_\sfC,\WE_\sfC)$ both exist.

If $UV\cong VU$, $LV\cong VL$ (or, equivalently, $UR\cong RU$), 
then there exists a right-induced model structure on $\sP$, created by $U\colon \sP \lra \sM$, and a left-induced model structure on $\sP$, created by $V\colon \sP \lra \sfC$, so that the identity is a left Quillen functor from the right-induced model structure to the left-induced one:
\[ \xymatrix@C=4pc@R=4pc{  \sP_{\mathrm{right}} \ar@<1ex>[r]^-\id \ar@{}[r]|-\perp & \sP_{\mathrm{left}}. \ar@<1ex>[l]^-\id}\]
\end{thm}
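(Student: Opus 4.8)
The plan is to construct both model structures on $\sP$ via the acyclicity reduction of Proposition~\ref{prop:acyclicity-reduction}, to verify the two acyclicity conditions using the commutativity of the square together with the fact that each induced adjunction is a Quillen pair, and finally to compare the two structures. To fix notation, I write $U\colon\sP\to\sM$ and $V\colon\sP\to\sfC$ for the two functors out of $\sP$, and $R\colon\sN\to\sM$, $L\colon\sN\to\sfC$ for the two remaining right- and left-hand edges. The first thing I would record is that the two candidate structures on $\sP$ have the \emph{same} weak equivalences: the right-induced ones are $U^{-1}\WE_\sM=U^{-1}V^{-1}\WE_\sN$ while the left-induced ones are $V^{-1}\WE_\sfC=V^{-1}U^{-1}\WE_\sN$, and these coincide because $UV\cong VU$. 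I denote the common class by $\WE_\sP$; its coincidence is exactly what the final comparison will exploit.

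For the right-induced structure along $U\colon\sP\to\sM$, I would apply Proposition~\ref{prop:acyclicity-reduction}(1), reducing existence to the acyclicity condition $\LLP(U^{-1}\Fib_\sM)\subset U^{-1}\WE_\sM=\WE_\sP$. To check it, let $f\in\LLP(U^{-1}\Fib_\sM)$; it is enough to prove that $Vf$ is an acyclic cofibration in $\sfC$, since then $f\in V^{-1}\WE_\sfC=\WE_\sP$. As $\sfC$ is right-induced from $\sN$, an acyclic cofibration there is precisely a map in $\LLP(U^{-1}\Fib_\sN)$, so by the adjunction $V\dashv R$ it suffices to show that $f$ lifts against $Rq$ for every $q$ in $\sfC$ with $Uq\in\Fib_\sN$. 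This is where the square is used: the hypothesis $UR\cong RU$ gives $U(Rq)\cong R(Uq)$, and since the left-induced adjunction on $\sM$ is a Quillen pair, $R\colon\sN\to\sM$ is right Quillen and sends the fibration $Uq$ to $R(Uq)\in\Fib_\sM$. Hence $Rq\in U^{-1}\Fib_\sM$, and $f$ lifts against it by assumption, which is the desired acyclicity.

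The left-induced structure along $V\colon\sP\to\sfC$ is entirely dual: Proposition~\ref{prop:acyclicity-reduction}(2) reduces it to $\RLP(V^{-1}\Cof_\sfC)\subset\WE_\sP$, and for $f$ in the left-hand class one shows $Uf$ is an acyclic fibration in $\sM$ by lifting against $Li$ for each $i$ with $Vi\in\Cof_\sN$, now using $LV\cong VL$ and the fact that $L\colon\sN\to\sfC$ is left Quillen because $\sfC$ is right-induced. For the concluding statement that $\id\colon\sP_{\mathrm{right}}\to\sP_{\mathrm{left}}$ is left Quillen, I note that since the two structures share $\WE_\sP$ it suffices to show the right adjoint $\id\colon\sP_{\mathrm{left}}\to\sP_{\mathrm{right}}$ preserves fibrations, i.e.\ $\RLP(V^{-1}(\Cof_\sfC\cap\WE_\sfC))\subset U^{-1}\Fib_\sM$. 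This is the same computation a third time: for such a $g$ and for $i$ in $\sM$ with $Vi\in\Cof_\sN\cap\WE_\sN$, the isomorphism $V(Li)\cong L(Vi)$ and left Quillenness of $L$ put $V(Li)$ in $\Cof_\sfC\cap\WE_\sfC$, so $g$ lifts against $Li$ and therefore $Ug\in\Fib_\sM$ by adjunction.

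The genuinely delicate point---and the one addressed by the correction recalled in Remark~\ref{rem:corrected}---is not any of these formal lifting arguments but the standing hypothesis of Proposition~\ref{prop:acyclicity-reduction} that the relevant weak factorization systems exist on $\sP$. One cannot simply quote Corollary~\ref{cor:cofib-gen} on the right-induced side, because there the middle category $\sM$ carries a \emph{left}-induced structure and is therefore typically not cofibrantly generated. Instead the existence of these weak factorization systems has to come from accessibility: the induced structures on $\sM$ and $\sfC$ are again accessible, accessibility being stable under left- and right-induction, and the (GKR-corrected) Bourke--Garner machinery then manufactures the required accessible weak factorization systems on the locally presentable category $\sP$ along $U$ and $V$. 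Granting this, Proposition~\ref{prop:acyclicity-reduction} applies and the three acyclicity computations above finish the proof.
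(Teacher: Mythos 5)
The paper does not prove this statement; it is quoted verbatim from \cite[Theorem 2.3.2]{HKRS}, so there is no in-text proof to compare against. Your argument is correct and follows the same strategy as the cited source: the two induced classes of weak equivalences on $\sP$ coincide via $UV\cong VU$, each acyclicity condition is verified by transposing lifting problems across the square using $UR\cong RU$ (resp.\ $LV\cong VL$) and the Quillen-pair property of the already-induced structures on $\sM$ and $\sfC$, and the comparison of the two structures is the same transposition a third time. You also correctly identify that the only non-formal input is the existence of the lifted weak factorization systems on $\sP$, which must come from the accessibility machinery (as corrected in \cite{GKR}) rather than from Corollary~\ref{cor:cofib-gen}, since the left-induced structure on $\sM$ need not be cofibrantly generated.
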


\begin{rem} All the results above can be generalized from cofibrantly generated model categories to \emph{accessible} model categories in the sense of \cite{RosickyAccessibleModelCats}. Recall that a model structure on a locally presentable category is accessible if its functorial factorizations are given by accessible functors. Lifting results for accessible model structures are discussed in detail in \cite{HKRS} and \cite{GKR}.
\end{rem}

\bibliographystyle{plain}
\bibliography{biblio}

\begin{thebibliography}{10}

\bibitem{AroneChing}
G.~Arone and M.~Ching.
\newblock A classification of {T}aylor towers of functors of spaces and
  spectra.
\newblock {\em Adv. Math.}, 272:471--552, 2015.

\bibitem{Banff1}
M.~Bayeh, K.~Hess, V.~Karpova, M.~K{\k{e}}dziorek, E.~Riehl, and B.~Shipley.
\newblock Left-induced model structures and diagram categories.
\newblock In {\em Women in topology: collaborations in homotopy theory}, volume
  641 of {\em Contemp. Math.}, pages 49--81. Amer. Math. Soc., Providence, RI,
  2015.

\bibitem{BlomquistHarper1}
J.~R. Blomquist and J.~E. Harper.
\newblock An integral chains analog of {Q}uillen's rational homotopy theory
  equivalence.
\newblock arXiv:1611.04157.

\bibitem{BlomquistHarper2}
J.~R. Blomquist and J.~E. Harper.
\newblock Iterated suspension spaces and higher {F}reudenthal suspension.
\newblock arXiv:1612.08622.

\bibitem{BlomquistHarper3}
J.~R. Blomquist and J.~E. Harper.
\newblock Suspension spectra and higher stabilization.
\newblock arXiv:1612.08623.

\bibitem{BorceuxHandbook2}
F.~Borceux.
\newblock {\em Handbook of categorical algebra. 2}, volume~51 of {\em
  Encyclopedia of Mathematics and its Applications}.
\newblock Cambridge University Press, Cambridge, 1994.
\newblock Categories and structures.

\bibitem{BourkeGarner}
J.~Bourke and R.~Garner.
\newblock Algebraic weak factorisation systems {I}: {A}ccessible {AWFS}.
\newblock {\em J. Pure Appl. Algebra}, 220(1):108--147, 2016.

\bibitem{BousfieldKan}
A.~K. Bousfield and D.~M. Kan.
\newblock {\em Homotopy limits, completions and localizations}.
\newblock Lecture Notes in Mathematics, Vol. 304. Springer-Verlag, Berlin,
  1972.

\bibitem{ChingRiehl}
M.~Ching and E.~Riehl.
\newblock Coalgebraic models for combinatorial model categories.
\newblock {\em Homology Homotopy Appl.}, 16(2):171--184, 2014.

\bibitem{GKR}
R.~Garner, M.~K\k{e}dziorek, and E.~Riehl.
\newblock Lifting accessible model structures.
\newblock arXiv:1802.09889.

\bibitem{GoerssJardine}
P.~G. Goerss and J.~F. Jardine.
\newblock {\em Simplicial homotopy theory}, volume 174 of {\em Progress in
  Mathematics}.
\newblock Birkh\"auser Verlag, Basel, 1999.

\bibitem{hess:descent}
K.~Hess.
\newblock A general framework for homotopic descent and codescent.
\newblock arXiv:1001.1556.

\bibitem{HKRS}
K.~Hess, M.~K\k{e}dziorek, E.~Riehl, and B.~Shipley.
\newblock A necessary and sufficient condition for induced model structures.
\newblock {\em J. Topol.}, 10(2):324--369, 2017.

\bibitem{Coalgebras_Comonad_Post}
K.~Hess and B.~Shipley.
\newblock The homotopy theory of coalgebras over a comonad.
\newblock {\em Proc. Lond. Math. Soc. (3)}, 108(2):484--516, 2014.

\bibitem{HoveySSSymSpectra}
M.~Hovey, B.~Shipley, and J.~Smith.
\newblock Symmetric spectra.
\newblock {\em J. Amer. Math. Soc.}, 13(1):149--208, 2000.

\bibitem{joyal-tierney}
A.~Joyal and M.~Tierney.
\newblock Quasi-categories vs {S}egal spaces.
\newblock In {\em Categories in algebra, geometry and mathematical physics},
  volume 431 of {\em Contemp. Math.}, pages 277--326. Amer. Math. Soc.,
  Providence, RI, 2007.

\bibitem{MacLane}
S.~Mac~Lane.
\newblock {\em Categories for the working mathematician}, volume~5 of {\em
  Graduate Texts in Mathematics}.
\newblock Springer-Verlag, New York, second edition, 1998.

\bibitem{RosickyAccessibleModelCats}
J.~Rosick\'y.
\newblock Accessible model categories.
\newblock {\em Appl. Categ. Structures}, 25(2):187--196, 2017.

\end{thebibliography}

\end{document}